\newcommand*{\N}{\mathds{N}}
\newcommand*{\R}{\mathds{R}}
\newcommand{\reg}{\mathcal R}
\newcommand{\funk}{\mathcal F}
\newcommand{\Ko}{\mathbf{K}}
\newcommand{\Lo}{\mathbf L}
\newcommand{\Ao}{\mathbf A}
\newcommand{\Wo}{\mathbf W}
\newcommand{\U}{\mathbb U}
\newcommand{\V}{\mathbb V}
\newcommand{\X}{\mathbb X}
\newcommand{\Y}{\mathbb Y}
\newcommand{\Lc}{\mathcal{L}}
\newcommand{\distance}{\mathcal{D}}
\newcommand{\similarity}{\mathcal{S}}
\newcommand{\tik}{\mathcal{T}}
\newcommand{\net}{\mathcal{N}}
\newcommand{\act}{\sigma}
\newcommand{\al}{\alpha}
\newcommand{\signal}{x}
\newcommand{\data}{y}
\def\plus{{\boldsymbol{\texttt{+}}}}
\DeclarePairedDelimiter{\abs}{\lvert}{\rvert}
\DeclarePairedDelimiter{\norm}{\lVert}{\rVert}
\DeclarePairedDelimiter{\innerprod}{\langle}{\rangle}
\DeclarePairedDelimiter{\set}{\{}{\}}
\DeclareMathOperator{\ran}{ran}
\DeclareMathOperator*{\argmin}{arg\,min}
\DeclareMathOperator*{\crit}{crit}
\DeclareMathOperator*{\relu}{ReLU}
\newtheorem{theorem}{Theorem}
\newtheorem{lemma}[theorem]{Lemma}
\theoremstyle{definition}
\newtheorem{proposition}[theorem]{Proposition}
\newtheorem{example}[theorem]{Example}
\newtheorem{cond}[theorem]{Condition}
\newtheorem{remark}[theorem]{Remark}
\newtheorem{definition}[theorem]{Definiton}
\colorlet{lred}{red!40}
\colorlet{lgreen}{green!40}
\colorlet{lblue}{blue!40}
\definecolor{bananamania}{rgb}{0.98, 0.91, 0.71}
\numberwithin{equation}{section}
\numberwithin{table}{section}
\numberwithin{figure}{section}
\numberwithin{theorem}{section}
\title{Convergence analysis  of critical point regularization with non-convex regularizers}
\author{Daniel Obmann}
\author{Markus Haltmeier}
\affil{Department of Mathematics, University of Innsbruck\authorcr
Technikerstrasse 13, 6020 Innsbruck, Austria\authorcr
 {\tt \{daniel.obmann, markus.haltmeier\}@uibk.ac.at}\authorcr\mbox{}}
\begin{document}

\maketitle

\begin{abstract} One of the key assumptions in the stability and convergence analysis of variational regularization  is the ability of finding global minimizers.
 However, such an assumption is often not feasible when the regularizer is a black box or non-convex making the search for global minimizers of the involved Tikhonov functional a challenging task. This is in particular the case for the emerging class of learned regularizers defined by  neural networks. Instead, standard minimization schemes are applied which typically only guarantee that a critical point is found. To address this issue, in this paper we study stability and convergence properties of critical points of Tikhonov functionals with a possible non-convex regularizer.  To this end, we introduce the concept of relative sub-differentiability and study its basic properties. Based on this concept, we develop a convergence analysis assuming relative sub-differentiability of the regularizer. The rationale behind the proposed concept is that critical points of the Tikhonov functional are also relative critical points and that for the latter a convergence theory can be developed. For the case where the noise level tends to zero, we derive a limiting problem representing first-order optimality conditions of a related restricted optimization problem. Besides this, we also give a comparison with classical methods and show that the class of ReLU-networks are appropriate choices for the regularization functional. Finally, we provide numerical simulations that support our theoretical findings and the need for the sort of analysis that we provide in this paper.


\medskip

\noindent \textbf{Keywords:} Inverse problems, regularization, critical points, stability guarantees, learned regularizer, non-convex regularizer, neural networks, variational methods
\end{abstract}

\section{Introduction} \label{sec:introduction}

In various scientific fields and applications, such as medical imaging or remote sensing, it is often not possible to obtain the desired quantity of interest directly. Assuming a linear measurement model, recovering the quantity of interest requires solving an inverse problem of the form
\begin{equation}\label{eq:ip}
\data^\delta   = \Ko \signal + \eta^\delta \,,
\end{equation}
where  $\Ko \colon \X \to \Y$ is a linear operator between Hilbert spaces modeling the forward problem, $\eta^\delta$ is the data perturbation, $\data^\delta \in \Y$ is the noisy data and $\signal \in \X$ is the sought for signal. In many cases these problems are ill-posed, meaning that no continuous right inverse of the operator $\Ko$ exists. To overcome such issues several established approaches for the stable approximation of solutions of inverse problems exist.

\subsection{Regularization with non-convex penalties}

Particularly popular regularization techniques   are  variational methods \cite{EngHanNeu96, scherzer2009variational}. These methods recover regularized solutions $\signal_\al^\delta$ as global minimizers of the  Tikhonov functional
\begin{equation} \label{eq:tik-fun}
	\tik_{\al, \data^\delta}(\signal) \coloneqq    \frac{1}{2} \norm{\Ko \signal - \data^\delta}^2  + \al \reg(\signal) \,.
\end{equation}
Here,  $\reg$ is a regularizer which encodes prior information about the desired solution and  $\frac{1}{2} \norm{\Ko \signal - \data^\delta}^2$ plays the role of a data-discrepancy measure. Classically, regularizers  have  been hand-crafted, including $L^2$-penalties,  sparse regularization techniques or total variation   \cite{EngHanNeu96,grasmair2008sparse,acar1994analysis}.  While such hand-crafted regularizers are often convex and hence global minima  can be computed by classical convex optimization, hand-crafted priors typically lack adaptability to available data.

In more recent years,  there has been a shift  to learned and potentially non-convex priors \cite{antholzer2021discretization, li2020nett, lunz2018adversarial, mukherjee2020learned, obmann2021augmented}. It has been observed that these methods often outperform classical methods. Moreover, a full convergence analysis has been  be provided \cite{obmann2021augmented,li2020nett}. However,  such an analysis assumes  minimizers of the Tikhonov  functional to be known  or at least be given within a certain accuracy. For non-convex regularizers such an assumption is unrealistic and global minimizers are challenging to compute. Instead, when trying to find a regularized solution  one often employs minimization algorithms such as gradient descent or variations thereof which converge to critical points  (such as local minimizers close to the initial guess) rather than to global minimizers of the Tikhonov functional. While one could constrain the learned regularizers to only include convex functionals  \cite{amos2017input,mukherjee2020learned} this might result in suboptimal reconstructions when the underlying signal class is inherently non-convex. For such classes  non-convexity of the regularizer can be a highly desirable property and as such a convergence analysis for this case is needed. Importantly, such an analysis should not rely on the strict assumption that the regularized solutions  are global minimizers of the underlying Tikhonov functional.

We briefly mention here that there exist other interesting cases where the Tikhonov functional is non-convex such as for example in the case of a nonlinear forward operator. However, in this paper we only consider the linear case.
Besides this we also mention that there are potentially different ways to deal with non-convexity of Tikhonov functionals, for example, by use of convexification  \cite{klibanov2021convexification}. However, for the learned regularizers we have in mind (see results in Section~\ref{sec:discussion}), the modification of the involved functionals  is nontrivial in general. Besides, the modification of the learned functionals can change the original properties of the learned functional in an unfavorable way.

\subsection{Proposed critical point regularization}

In this paper we present a convergence analysis of critical points  of the Tikhonov  functional $\tik_{\al, \data^\delta}$ for the stable solution of inverse problems  of the form \eqref{eq:ip}.  We refer to any such method which recovers a critical point as regularized solutions as critical point regularization. In fact, we show stability and convergence for  a relaxed notion of critical points. More precisely we study stability and convergence of $\phi$-critical points, namely elements satisfying $0 \in \partial_\phi \tik_{\al, \data^\delta} (\signal_\al^\delta)$. Here $\partial_\phi$ is the $\phi$-relative sub-differential, a novel concept that we introduce and study  in this paper.  Whenever the classical norm-discrepancy is used to measure similarity, as the noise level tends to zero, we show that regularized elements converge to elements  $ \signal^\plus \in \X$ with
\begin{equation}
	-\partial_\phi \reg(\signal^\plus) \cap  \ker(\Ko)^\perp \neq \emptyset \,,
\end{equation}
resembling first order conditions of the constraint optimization problem $\argmin\{ \reg(\signal) \mid  \Ko \signal = \data \}$ defining $\reg$-minimizing solutions.

We give our analysis for more general data discrepancy measures  $\similarity(\signal, \data^\delta)$ for which  $ \norm{\Ko \signal - \data^\delta}^2/2$ is only a special case.
Further, we mention that in \cite{durand2006stability} an analysis of stability for the case of local minima has been done. Opposed to our work the authors of \cite{durand2006stability} restrict themselves to the finite dimensional setting and do not  provide convergence results for the case that the noise-level tends to zero.   Allowing that the underlying space is a general Hilbert space without any restrictions on the dimension has the advantage that the analysis is independent of the dimension and as such applies to any discretization used for practical applications. The precise analysis of the discretization is beyond the scope of this paper and we refer to the corresponding work in conventional variational regularization \cite{poschl2010discretization,antholzer2021discretization}.

Note that whenever the Tikhonov functional is relatively subdifferentiable, then critical points of the Tikhonov functional are also relatively critical points if $\phi$ is constructed accordingly, and the proposed concept yields a convergent regularization for critical points. We will show that this is actually the case, for example, for a class of learned regularizers defined by neural networks.   We are not aware of any other study which includes stability and convergence of critical points and to the best of our knowledge the present analysis is the first to attempt this. 
 
\subsection{Main contributions}

In this paper we introduce the concept of relative sub-differentiability as a generalization of sub-differentiability of convex functions  to the non-convex case. We develop theory for relative sub-differentiability  and show that corresponding  $\phi$-critical points can be found by employing a generalized gradient descent method.   From the  viewpoint of regularization theory we give existence, stability and convergence results for $\phi$-critical points and derive the limiting problem for critical point regularization. As opposed to the convex case where the solutions one obtains are $\reg$-minimizing solutions we get as a limiting problem a related first order optimality condition. As a special case of our analysis we derive stability and convergence results for critical points of differentiable Tikhonov functionals. For example, in this case, we get that $-\reg'(\signal_\plus)$ is in the normal cone of the set of all solutions.

Finally, we provide numerical simulations which support our theoretical findings, in particular the stability, convergence and the limiting problem. Moreover, the results of our numerical simulations show that even in simple cases of non-convex regularizers the assumption of obtaining global minima or even local minima is infeasible thus further emphasizing the need for the analysis we provide in this paper. Besides, the numerical results show that the solutions we obtain cannot be expected to be $\reg$-minimizing solutions and may even be local maxima of the regularizer whenever the initialization is chosen inappropriately and the algorithm does not guarantee that local minima are obtained.

\subsection{Overview}

The rest of the paper is organized as follows. In Section~\ref{sec:rsd} we motivate and introduce the concept of relative sub-differentiability and corresponding $\phi$-critical points.  Moreover, we study basic properties of relative sub-differentiability and show that $\phi$-critical points  can be achieved by employing a generalized gradient descent method. Section~\ref{sec:theory} builds on this concept of relative sub-differentiability and gives a convergence analysis for critical point regularization. Moreover, we take a closer look at the differentiable case and identify the limiting problem in this case. In Section~\ref{sec:numerics} we provide numerical experiments which support our theoretical findings such as stability and convergence. Finally, we conclude the paper by giving a brief summary and outlook in Section~\ref{sec:conclusion}.

\section{Relative sub-differentiability}
\label{sec:rsd}

In this section and in the rest of the paper, unless stated otherwise, we assume that $\X$ is a Banach space, denote by $\X^*$ its dual and by $\innerprod{\cdot, \cdot}$ the dual pairing of $\X$ and $\X^*$, i.e. for $\varphi \in \X^*$ and $\signal \in \X$ we have $\innerprod{\varphi, \signal} = \varphi(\signal)$. Moreover, we denote by $\reg'$ the derivative of any differentiable function $\reg \colon \X \to \R$ and for any similarity measure $\similarity \colon \X \times \Y \to \R$ we denote by $\similarity'$ the derivative with respect to its first argument.

Before giving the crucial definition of  relative sub-differentiability we recall the importance  of classical sub-differentiabilty in the context  of convex functions. Recall that  $r \in \X^*$ is called subgradient of some functional $\funk \colon \X \to \R$ at $\signal \in \X$ if $\funk(\signal) + \innerprod{r, u - \signal} \leq \funk(u) $ for all $ u \in\X $ and that  $\funk$  is sub-differentiable whenever the set of subgradients is non-empty for all $\signal \in \X$. Minimizers $\signal$ of $\funk$ are  characterized by the  optimality condition $0 \in \partial_0 \funk(\signal)$ where $\partial_0 \funk(\signal)$ denotes the set of all subgradients at point $x$. However, sub-differentiability implies convexity. We will therefore develop a relaxed  concept of sub-differentiability relative to some functional $\phi \colon \X \to [0, \infty)$ by replacing the right hand side in the definition of subgradients by $\funk(u)  + \phi(u)$.

\subsection{Definition and basic propertties}

The following concept generalizing sub-differentiablity is also applicable to non-convex  functions.

\begin{definition}[Relative sub-differentiability] \label{def:errorconvex}
Let $\funk \colon \X \to \R$  and $\phi \colon \X \to [0, \infty)$.
\begin{enumerate}[topsep=0em, label=(\alph*)]
\item $r \in \X^*$ is called $\phi$-relative subgradient of $\funk$ at  $\signal \in \X$ if
\begin{equation} \label{eq:errorconvex}
  u \in \X \colon \quad  \funk(\signal) + \innerprod{r, u - \signal} \leq \funk(u) + \phi(u).
\end{equation}
\item

The set  set of all $\phi$-relative subgradients at $\signal$ is denoted by $\partial_\phi \funk(\signal)$ and called $\phi$-relative sub-differential of $\funk$ at $x$.

\item
The functional $\funk$  is called $\phi$-relative sub-differentiable if   $\partial_\phi \funk(\signal)  \neq \emptyset$ for all $\signal \in \X$.

\end{enumerate}
\end{definition}

Some remarks about Definition~\ref{def:errorconvex} are in order.

\begin{remark} \mbox{}
\begin{itemize}[topsep=0em]


 \item We call any such function $\phi$ a bound. It is clear that such a bound cannot be unique, since whenever $\funk$ is a relatively sub-differentiable function with bound $\phi$ then it is also relatively sub-differentiable with bound $\phi + c$ for any $c \in [0, \infty)$.

   \item Choosing $\phi = 0$ we see that any convex and sub-differentiable function $\funk$ is relatively sub-differentiable, i.e. the class of all relative sub-differentiable functions  includes the set of convex sub-differentiable functions.

 \item The relative subgradients depend on the function $\phi$. This shows that whenever we choose a larger $\phi$ then we generally also increase the set of possible relative subgradients, i.e. if $\phi_1 \leq \phi_2$ then $\partial_{\phi_1} \funk \subseteq \partial_{\phi_2} \funk$.

 \item Similar to the concept of subgradients for convex functions, the concept of relative subgradients is a global property since the defining inequality has to hold for any point $u \in \X$.
\end{itemize}
\end{remark}

Another approach of generalizing convexity and subgradients (and as a consequence critical points) is given in \cite{grasmair2010generalized, singer1997abstract} where convexity with respect to a set of functions $W$ is defined. In such a setting $w \in W$ is a subgradient of $\funk$ at $\signal$ whenever $\funk(u) \geq \funk(\signal) + w(u) - w(\signal)$ for any $u \in \X$. As a consequence any critical point, i.e. a point where $0$ is a subgradient, will also be a global minimizer and hence such a generalization cannot be used for our purposes. In \cite{clarke1975generalized} another concept of generalized gradients is discussed. In this setting the definition of the gradient depends only on neighborhoods around the point of interest. As a consequence we cannot expect the critical points to have any global properties which are necessary for the analysis in Section~\ref{sec:theory} hence making this generalization unfit for our analysis. However, it should be noted, that whenever convenient one might substitute any differentiability assumption on the involved functionals with Clarke's generalized gradient concept in any of the following discussions.

In what follows we will assume that $\funk \colon \X \to \R$ is $\phi$-relatively sub-differentiable for some fixed $\phi$. Based on this definition we generalize the concept of critical points as follows.

\begin{definition}[$\phi$-critical points] \label{def:criticalpoint}
We call $\signal \in \X$ a $\phi$-critical point of $\funk$ if  $0 \in \partial_\phi \funk(\signal)$. Moreover, we denote by $\crit_\phi \funk$ the set of all $\phi$-critical  points of $\funk$.
\end{definition}

It should be noted that the definition of $\phi$-critical  points depends on $\phi$ and in practical applications one might not have access to $\phi$. In such cases evaluating or finding relative subgradients might be infeasible. Nevertheless, the concept of $\phi$-critical points is general enough to include an important class of points as the following remark illustrates.

\begin{remark}[Critical points of differentiable functions] \label{re:differentiable}
Let us assume that $\funk \colon \X \to \R$ is a differentiable function which satisfies the inequality $\funk(\signal) + \innerprod{\funk'(\signal), u - \signal} \leq \funk(u) + \phi(u)$ for any $\signal, u \in \X$ and some $\phi \colon \X \to [0, \infty)$.
Then we have $\funk'(\signal) \in \partial_\phi \funk (\signal)$. This shows that in this special case we have access to at least one element of $\partial_\phi \funk$. In particular, any critical point of $\funk$, i.e. a point $\signal \in \X$ with $\funk'(\signal) = 0$, will always yield a $\phi$-critical point of $\funk$ in the sense of Definition~\ref{def:criticalpoint} and hence Definition~\ref{def:criticalpoint} is a generalization of the classical concept of critical points for differentiable functions satisfying above inequality.\\
This shows that for a class of functions we have access to at least one element of the relative subgradient of $\funk$. More importantly, for this class of functions we can make assertions about the points $\signal \in \X$ where $\funk'(\signal) = 0$ holds, i.e. points which are reachable by use of a (minimization) algorithm which guarantees to find a critical point.
\end{remark}

Before we move on, we briefly give a prototypical example of a non-convex function for which a bound $\phi$ can be chosen, such that $\funk'(\signal) \in \partial_\phi \funk(\signal)$.

\begin{remark}[Examples of relative sub-differentiability] \label{re:examples}
We start by giving a simple example of a function which is non-convex, but relatively sub-differentiable. To this end, let $a, b \in \R$ be given and define $\funk(t) = (t + a)^2 (t + b)^2$. It is readily seen, that $\funk(t) + \funk'(t)(s - t)$ is a polynomial of degree $4$ with negative leading coefficient. Hence, this function is bounded from above and the relative sub-differentiability immediately follows by for example choosing $\phi(s) = \sup_t \funk(t) + \funk'(t)(s - t)$. Clearly then, the function $g(t) = \funk(t) + c t^2$ is also sub-differentiable for $c > 0$. The function $g$ is plotted in Figure~\ref{fig:example} on the left side for different parameters $a, b, c$ on a semi-logarithmic scale to emphasize the non-convexity.

\begin{figure}
    \centering
    \includegraphics[scale=0.4]{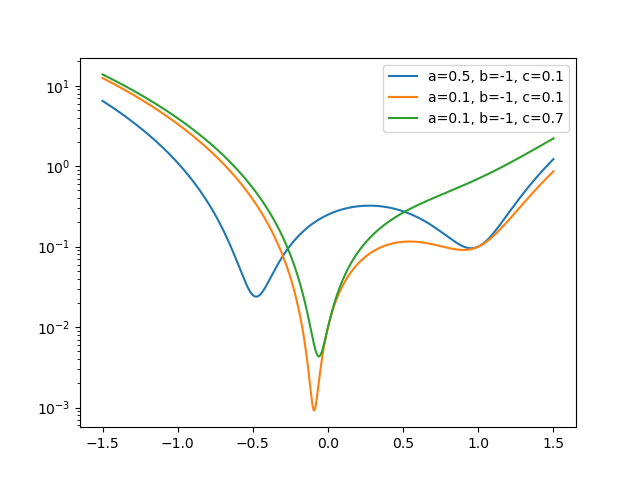}
    \includegraphics[scale=0.4]{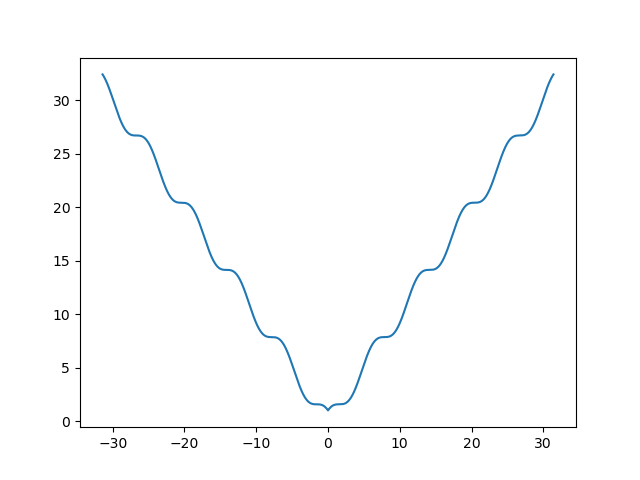}
    \caption{\textbf{Left:} Example of relatively sub-differentiable functions where the classical gradient is contained in the relative sub-gradient. \textbf{Right:} A function for which classical derivative cannot be in the relative sub-gradient.}
    \label{fig:example}
\end{figure}

Now let us consider the function $\funk(t) = \cos(t) + \abs{t}$, see Figure~\ref{fig:example} on the right. Then, due to the coercivity and the existence of critical points ``at infinity'', the derivative of $\funk$ cannot be in the relative sub-gradient of $\funk$ for any $\phi$. This example illustrates what types of functions are not included in the concept of relatively sub-differentiable functions for which the derivative is supposed to lie in the relative subgradient. In particular, the concept of relative sub-differentiability excludes coercive functionals which have critical points ``at infinity''.

\end{remark}

Before discussing how one might obtain $\phi$-critical points of relatively sub-differentiable functions, we list some useful properties of which we make constant use during the rest of the paper.

\begin{lemma}[Basic properties of relative subgradients] \label{lemma:basic}
Let $\funk, \funk_i \colon \X \to \R$ and $\phi, \phi_i \colon \X \to [0, \infty)$ be bounds of $\funk, \funk_i$ for $i = 1, \dots, n$ and $w > 0$. Moreover, set  $c \coloneqq \inf \funk  + \phi$. Then the following hold
\begin{enumerate}[topsep=0em,label=(\arabic*)]
    \item\label{rsg1} $\sum \partial_{\phi_i} \funk_i \subseteq \partial_{\sum \phi_i} \sum \funk_i $
    \item\label{rsg2} $w \partial_\phi \funk = \partial_{w \phi} (w \funk)$
    \item\label{rsg3} If $\funk$ is convex then $\partial_0 \funk (\signal) \subseteq \partial_\phi \funk(\signal)$ for any $\signal \in \X$
    \item\label{rsg4} $\partial_\phi \funk (\signal)$ is convex and (weak*) closed
    \item\label{rsg5} If $\signal_\plus \in \argmin  \funk(\signal)$ then $0 \in \partial_\phi \funk (\signal_\plus)$
    \item\label{rsg6} $0 \in \partial_\phi \funk (\signal) \Longleftrightarrow \funk(\signal) \leq c$
    \item\label{rsg7} If $\funk$ is Lipschitz  and $\phi$ bounded on bounded subsets, then $\partial_\phi \funk$ is bounded. In particular, in this case the set $\partial_\phi \funk$ is weak*-compact.
    \item\label{rsg8} Let $p_k = g_k + z_k$ where $g_k \in \partial_\phi \funk(\signal_k)$ and $\norm{z_k} \leq \varepsilon_k$ with $\varepsilon_k \to 0$. Assume that $\signal_k$ converge weakly to $\signal_\plus$ and $g_k$ converge to $g$ and that $\funk$ is weakly lower semi-continuous. Then $g \in \partial_\phi \funk(\signal_\plus)$. If, instead, $\signal_k$ converge strongly to $\signal_\plus$, $g_k$ converge weakly to $g$ and $\funk$ is lower semi-continuous then we also have $g \in \partial_\phi \funk(\signal_\plus)$.
\end{enumerate}
\end{lemma}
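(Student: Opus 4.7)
My plan is that claims \ref{rsg1}--\ref{rsg6} are essentially bookkeeping with the defining inequality \eqref{eq:errorconvex}. For \ref{rsg1}, I sum the $n$ inequalities $\funk_i(\signal) + \innerprod{g_i, u-\signal} \leq \funk_i(u) + \phi_i(u)$ for $g_i \in \partial_{\phi_i}\funk_i(\signal)$; for \ref{rsg2}, I multiply through by $w > 0$; for \ref{rsg3}, I use that $\phi \geq 0$ so any classical subgradient already satisfies the sharper inequality with $\phi=0$. For \ref{rsg4}, the relative subdifferential is the intersection over $u \in \X$ of the weak*-closed affine half-spaces $\{r \in \X^* : \innerprod{r, u-\signal} \leq \funk(u) + \phi(u) - \funk(\signal)\}$, which is both weak*-closed and convex. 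Claim \ref{rsg5} is immediate since a global minimizer $\signal_\plus$ satisfies $\funk(\signal_\plus) \leq \funk(u) \leq \funk(u) + \phi(u)$ and $\innerprod{0, u-\signal_\plus}=0$. For \ref{rsg6}, the condition $0 \in \partial_\phi\funk(\signal)$ unfolds to $\funk(\signal) \leq \funk(u)+\phi(u)$ for all $u$, which is equivalent to $\funk(\signal) \leq \inf_u(\funk(u)+\phi(u)) = c$.

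For \ref{rsg7}, let $L$ be the Lipschitz constant of $\funk$ and, for a fixed $\signal \in \X$, set $M \coloneqq \sup\{\phi(\signal + h) : \norm{h} \leq 1\}$, which is finite by assumption. For any $r \in \partial_\phi \funk(\signal)$ and any $h$ with $\norm{h}=1$, plugging $u = \signal \pm h$ into \eqref{eq:errorconvex} and using $\abs{\funk(\signal \pm h) - \funk(\signal)} \leq L$ yields $\pm \innerprod{r, h} \leq L + M$, hence $\norm{r}_{\X^*} \leq L + M$. Combined with the weak*-closedness from \ref{rsg4}, the Banach--Alaoglu theorem yields weak*-compactness.

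The real content is in \ref{rsg8}, where we must pass to the limit in
\begin{equation*}
\funk(\signal_k) + \innerprod{g_k, u - \signal_k} \leq \funk(u) + \phi(u).
\end{equation*}
The subtlety lies in controlling the duality pairing $\innerprod{g_k, \signal_k}$ under mixed topologies. In the first scenario ($\signal_k \rightharpoonup \signal_\plus$ weakly, $g_k \to g$ strongly in $\X^*$), joint continuity of the strong/weak pairing gives $\innerprod{g_k, \signal_k} \to \innerprod{g, \signal_\plus}$; in the second ($\signal_k \to \signal_\plus$ strongly, $g_k \rightharpoonup g$ weakly in $\X^*$), I decompose $\innerprod{g_k, \signal_k} = \innerprod{g_k, \signal_\plus} + \innerprod{g_k, \signal_k - \signal_\plus}$, where the first term converges by weak convergence of $(g_k)$ and the second tends to zero because $(g_k)$ is norm-bounded and $\signal_k \to \signal_\plus$. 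Taking an appropriate $\liminf$ on the left (weak lower semi-continuity in the first case, sequential lower semi-continuity in the second) and the limit on the right yields \eqref{eq:errorconvex} for $g$ at $\signal_\plus$. The perturbation $z_k$ does not interfere: since $\norm{z_k} \leq \varepsilon_k \to 0$ and $u - \signal_k$ stays bounded (the limits $\signal_k$ are, in both cases, convergent and hence bounded), replacing $p_k$ by $g_k$ in the pairing costs only $o(1)$. I expect this dual-pairing passage under mixed topologies to be the only nontrivial step, the remaining items being formal consequences of Definition~\ref{def:errorconvex}.
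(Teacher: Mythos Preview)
Your proposal is correct and follows essentially the same route as the paper's proof: items \ref{rsg1}--\ref{rsg6} are handled by direct manipulation of the defining inequality, \ref{rsg7} by testing against unit directions and invoking Banach--Alaoglu, and \ref{rsg8} by passing to the $\liminf$ using (weak) lower semi-continuity of $\funk$ together with convergence of the pairing $\innerprod{g_k, u-\signal_k}$ under the mixed strong/weak hypotheses. The only noteworthy difference is cosmetic: for \ref{rsg4} you argue via the intersection-of-half-spaces description (which gives convexity and weak*-closedness in one stroke), whereas the paper verifies both properties by hand; and in \ref{rsg8} you are more explicit than the paper about why $\innerprod{g_k,\signal_k}\to\innerprod{g,\signal_\plus}$ in each of the two topology pairings, and you correctly observe that the perturbation $z_k$ plays no essential role given that the hypothesis already grants $g_k\to g$.
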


\begin{proof}\mbox{}
\ref{rsg1} Let $p_i \in \partial \funk_i(\signal)$ and define $p = \sum_{i} p_i$. Then we have
    \begin{equation*}
        \sum_i \funk_i(\signal) + \innerprod{p, u - \signal} = \sum_i \left( \funk_i(\signal) + \innerprod{p_i, u - \signal} \right)  \leq \sum_i \left( \funk_i(u) + \phi_i(u) \right)
    \end{equation*}
    and hence the claim follows.

\ref{rsg2} Assume that $p \in \partial_\phi \funk(\signal)$. Then we have $w \funk(\signal) + w \innerprod{p, u - \signal} \leq w \left( \funk(u) + \phi(u) \right)$ by non-negativity of $w$ and hence $w \partial_\phi \funk \subseteq \partial_{w \phi} (w \funk)$.
    Now let $p \in \partial_{w \phi} (w \funk)$ then we define $q = \frac{p}{w}$ and it follows
    \begin{equation*}
        w \left( \funk(\signal) + \innerprod{q, u - \signal} \right) = w \funk(\signal) + \innerprod{p, u - \signal}
        \leq w \left( \funk(u) + \phi(u) \right)
    \end{equation*}
    which shows that $p \in w \partial_\phi \funk(\signal)$.

 \ref{rsg3} This is an immediate consequence of $\funk(u) \leq \funk(u) + \phi(u)$ by non-negativity of $\phi$.

    \ref{rsg4} Let $p_1, p_2 \in \partial_\phi \funk(\signal)$ and $\lambda \in (0, 1)$. Then we have
    \begin{align*}
        \funk(\signal) + \innerprod{\lambda p_1 + (1 - \lambda) p_2, u - \signal} &= \lambda ( \funk(\signal) + \innerprod{p_1, u -\signal}) + (1 - \lambda) (\funk(\signal) + \innerprod{p_2, u -\signal}) \\
        &\leq \lambda (\funk(u) + \phi(u)) + (1-\lambda) (\funk(u) + \phi(u)),
    \end{align*}
    which proves the convexity of $\partial_\phi \funk$.
    Now let us assume that $p_k \in \partial \funk (\signal)$ with $p_k$ (weak*) converges to $p$. By (weak*) convergence we have $\innerprod{p_k, u - \signal} \to \innerprod{p, u - \signal}$ and hence $p$ is also an relatively sub-differentiable subgradient.

    \ref{rsg5}     This is also a consequence of the non-negativity of $\phi$ and the assumption that $\signal_\plus$ is a global minimizer.

    \ref{rsg6} Let $0 \in \partial_\phi \funk(\signal)$. Then by definition we have $\funk(\signal) \leq \funk(u) + \phi(u)$ for any $u \in \X$ and hence also $\funk(\signal) \leq c$. On the other hand, if $\funk(\signal) \leq c$ then we have $\funk(\signal) \leq \funk(u) + \phi(u)$ for any $u \in \X$ and hence $0 \in \partial_\phi \funk(\signal)$.

    \ref{rsg7} Let $p \in \partial_\phi \funk(\signal)$ and set $u = \signal + v$ with $\norm{v} = 1$. Using the defining inequality we find \begin{align*}
        \innerprod{p, v} \leq \funk(\signal + v) - \funk(\signal) + \phi(\signal + v) \leq L + \phi(\signal + v)
    \end{align*}
    and thus by taking the supremum over $v$ we find that $\norm{p}$ is bounded.
    Using Banach-Alaouglu we see that $\partial_\phi \funk$ must be weak*-compact.

      \ref{rsg8} By assumption $\signal_k$ is bounded. Thus, we have
      \begin{align*}
          \funk(\signal_\plus) + \innerprod{g, u - \signal_\plus} &\leq \liminf_k \funk(\signal_k) + \innerprod{p_k, u - \signal_k}\\
          &\leq \liminf_k \funk(u) + \phi(u) + \innerprod{z_k, u - \signal_k}\\
          &\leq \funk(u) + \phi(u) + \liminf_k \varepsilon_k \norm{u - \signal_k} \\
          &= \funk(u) + \phi(u),
      \end{align*}
      which proves the claim.
\end{proof}

Lemma~\ref{lemma:basic} gives us a characterization of $\phi$-critical  points  as points $\signal$ for which $\funk + \phi$ is an upper bound of  $\funk(x)$. This characterization in particular implies that for any differentiable and relatively sub-differentiable function $\funk$ we have that the points $\signal$ with $\funk'(\signal) = 0$ must have bounded value independent of $\signal$.  Comparing this to the convex case we have that $\signal$ is a critical  point of the function $\funk$ if and only if $\signal$ is a global minimizer. In some sense, the  definition of $\phi$-critical  points allows for some error to be made and guarantees that  $\phi$-critical points cannot have arbitrarily large $\funk$-value. Moreover, whenever $\funk$ is coercive then all $\phi$-critical points must be inside some ball $B_r(0)$ for some $r > 0$.

\subsection{Computation of $\phi$-critical  points}
\label{ssec:algo}

We next answer the question of how to obtain $\phi$-critical  points at least for the case where  $\X$ is a Hilbert space.  Clearly, if $\funk$ is differentiable then one could consider classical gradient descent methods. Since we are also interested in non-differentiable functions, gradient descent in its classical form may not be applicable. Below we show that  a generalized gradient method using relative subgradients instead of gradients will yield $\phi$-critical  points in the sense of Definition~\ref{def:criticalpoint}. This shows that Algorithm~\ref{alg:descent} is a natural extension of subgradient descent \cite{shor2012minimization, boyd2003subgradient}.

\begin{algorithm}
\caption{Relative subgradient descent} \label{alg:descent}
\begin{algorithmic}
\Require Starting point $\signal_0 \in \X$, stepsizes $\eta_n > 0$
\State $n \gets 0$
\While{$0 \notin \partial_\phi \funk (\signal_n)$}
\State Choose $g^*_n \in \partial_\phi \funk (\signal_n)$ and $g_n \in \X$ such that $\innerprod{g_n^*, g_n} > 0$
\State $\signal_{n+1} = \signal_n - \eta_n g_n$
\State $n \gets n + 1$
\EndWhile
\end{algorithmic}
\end{algorithm}

The following results shows that Algorithm~\ref{alg:descent} converges to a $\phi$-critical point of the function $\funk$.
The given proof closely follows the one given in \cite{boyd2003subgradient} but does not assume a finite dimensional setting and considers relatively sub-differentiable functionals instead of sub-differentiable function.

\begin{theorem}[Convergence of Algorithm~\ref{alg:descent}]
\label{thm:descent} Assume that $\X$ is a Hilbert space and that $\funk$ is relatively sub-differentiable with bound $\phi$. Moreover, choose  $g_n = \lambda_n g_n^*$ in Algorithm~\ref{alg:descent} with $\lambda_n > 0$ such that $\norm{g_n} \leq C$ for all $n \in \N$.
Then for any point $u \in \X$ and any step $N \in \N$ we have
\begin{align*}
    \min_{i=1, \dots, N} \funk(\signal_i) \leq \funk(u) + \phi(u) + \frac{\norm{\signal_0 - u}^2 + C^2 \sum_{i=1}^N \eta_i^2}{2 \sum_{i=1}^N \eta_i}.
\end{align*}
\end{theorem}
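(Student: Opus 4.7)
The plan is to mimic the classical subgradient-descent analysis of \cite{boyd2003subgradient}, substituting the defining inequality \eqref{eq:errorconvex} of a relative subgradient for the usual convex subgradient inequality. Fix an arbitrary reference point $u \in \X$ and track the Lyapunov functional $n \mapsto \norm{\signal_n - u}^2$.

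First, I would expand the squared distance using the Hilbert-space update rule,
\begin{equation*}
\norm{\signal_{n+1} - u}^2 = \norm{\signal_n - u}^2 - 2\eta_n \innerprod{g_n, \signal_n - u} + \eta_n^2 \norm{g_n}^2,
\end{equation*}
and control the last term by $\eta_n^2 C^2$ using the hypothesis $\norm{g_n} \leq C$.

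Next, I would use the relative sub-differential inequality to control the cross term. Since $g_n^* \in \partial_\phi \funk(\signal_n)$, Definition~\ref{def:errorconvex} applied with test point $u$ gives $\innerprod{g_n^*, \signal_n - u} \geq \funk(\signal_n) - \funk(u) - \phi(u)$. Because $g_n = \lambda_n g_n^*$ with $\lambda_n > 0$, the same inequality holds for $g_n$ up to the positive factor $\lambda_n$, which can be absorbed into the stepsize $\eta_n$; after this renormalisation the expansion becomes
\begin{equation*}
\norm{\signal_{n+1} - u}^2 \leq \norm{\signal_n - u}^2 - 2\eta_n\bigl(\funk(\signal_n) - \funk(u) - \phi(u)\bigr) + \eta_n^2 C^2.
\end{equation*}
Telescoping this recursion over $N$ consecutive indices and discarding the nonnegative final term on the left leaves
\begin{equation*}
2\sum_{i=1}^N \eta_i \bigl(\funk(\signal_i) - \funk(u) - \phi(u)\bigr) \leq \norm{\signal_0 - u}^2 + C^2 \sum_{i=1}^N \eta_i^2.
\end{equation*}
To conclude, I would apply the elementary bound $\sum_{i=1}^N \eta_i \funk(\signal_i) \geq \bigl(\min_{1 \leq i \leq N} \funk(\signal_i)\bigr)\sum_{i=1}^N \eta_i$, valid since each $\eta_i > 0$, and divide through by $2\sum_{i=1}^N \eta_i$ to obtain the claimed inequality.

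The only mildly delicate bookkeeping concerns the scalar $\lambda_n$ linking $g_n$ and $g_n^*$ (the condition $\innerprod{g_n^*, g_n} > 0$ in Algorithm~\ref{alg:descent} together with the choice $g_n = \lambda_n g_n^*$ in the theorem). Once that factor is absorbed into the stepsize, the argument is entirely routine; the genuinely new ingredient compared with the convex case is the extra summand $\phi(u)$ appearing on the right-hand side, which is precisely the price paid by replacing convex sub-differentiability with relative sub-differentiability.
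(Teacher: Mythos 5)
Your proposal is correct and follows essentially the same route as the paper's own proof: expand $\norm{\signal_{n+1}-u}^2$ in the Hilbert norm, bound the cross term via the relative subgradient inequality \eqref{eq:errorconvex} (which contributes the extra $\phi(u)$), absorb the scaling $\lambda_n$ into the step, telescope, and finish with the $\min$ bound. The paper handles the $\lambda_n$ bookkeeping the same way, by rescaling so that $g_n = g_n^*$ may be assumed.
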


\begin{proof}
 Let $u \in \X$. After rescaling of $g_n^*$ according to assumption we may assume that $g_n = g_n^*$. Then by definition of $\signal_{n+1}$ we have
\begin{align*}
    \norm{\signal_{n+1} - u}^2 &= \norm{\signal_n - u}^2 - 2 \eta_n \innerprod{g_n, \signal_n - u} + \eta_n^2 \norm{g_n}^2 \\
    &\leq \norm{\signal_n - u}^2 - 2 \eta_n (\funk(\signal_n) - \funk(u) - \phi(u)) + \eta_n^2 \norm{g_n}^2.
\end{align*}
Applying this inequality recursively and using the fact that $\norm{\signal_{n+1} - u}^2 \geq 0$ we find
\begin{align*}
    2\sum_{i=1}^{n} \eta_i (\funk(\signal_i) - \funk(u) - \phi(u)) \leq \norm{\signal_0 - u}^2 + \sum_{i=1}^n \eta_i^2 \norm{g_i}^2,
\end{align*}
which together with the inequalities $\norm{g_i} \leq C$ and $\sum_{i=1}^n \eta_i \funk(\signal_i) \geq \min_{i=1,\dots,n} \funk(\signal_i) \sum_{i=1}^n \eta_i$ shows the desired result.
\end{proof}

Theorem \ref{thm:descent} shows that under the assumption that the sequence of step-sizes $(\eta_n)_{n \in \N}$ is square-summable but not summable, then in the limit we have $\lim_{n \to \infty} \min_{i=1, \dots, n} \funk(\signal_i) \leq \funk(u) + \phi(u)$ for any $u \in \X$.  Note the analysis and the proofs heavily rely on the usage of the functional $\phi$, but we note that at no point during Algorithm~\ref{alg:descent} do we need explicit knowledge of the functional $\phi$ but only access to elements of $\partial_\phi \funk$. In particular, in the case of Remark~\ref{re:differentiable} when using the gradient of $\funk$ as the update direction the generated sequence will yield a $\phi$-critical point.

Finally, assume that we have a functional of the form $\funk(\signal) = \similarity(\signal) + \al \reg(\signal)$ where each term is relatively sub-differentiable with bounds $\phi_\similarity$ and $\phi_\reg$. Then Lemma~\ref{lemma:basic} shows that for $s \in \partial_{\phi_\similarity} \similarity$ and $r \in \partial_{\phi_\reg} \reg$ we have $s + \al r \in \partial_{\phi_\similarity + \al \phi_\reg} \left( \similarity + \al \reg \right)$. This implies that Algorithm~\ref{alg:descent} can be applied in the case where we are looking for a $\phi$-critical  point of the sum of two relatively sub-differentiable functionals and only have access to elements of $\partial_{\phi_\similarity} \similarity$ and $\partial_{\phi_\reg} \reg$.

\section{Regularizing properties of $\phi$-critical points} \label{sec:theory}

In this section we present a  convergence analysis for $\phi$-critical points of  Tikhonov-type functionals $\tik_{\al, \data^\delta}$ extending the existing analysis  for global minima  \cite{scherzer2009variational}.
At this point we want to emphasize again that the assumption of being able to obtain global minima of $\tik_{\al, \data^\delta}$ can be extremely restrictive when $\reg$ is non-convex and the main goal of our analysis is to discard this assumption. Instead we focus only on $\phi$-critical  points of $\tik_{\al, \data^\delta}$ which may include local minimizers, saddle points or even local maxima.

Recall that  we are interested in Tikhonov-type  functionals $\tik_{\al, \data^\delta} \colon \X \to [0, \infty)$  of the form
\begin{equation} \label{eq:tik}
    \tik_{\al, \data^\delta}(\signal) = \similarity(\signal, \data^\delta) + \al \reg(\signal) \,,
\end{equation}
for given $\similarity \colon \X \times \Y \to [0, \infty)$ and $\reg \colon \X \to [0, \infty)$. Here, $\similarity$ is a similarity measure between $\signal$ and $\data^\delta$ and a standard situation we are interested in is  $\similarity(\signal, \data^\delta) = \frac{1}{2} \norm{\Ko (\signal) - \data^\delta}^2$ where $\Ko \colon \X \to \Y$ is the forward operator of the inverse problem of interest.  Instead of working with global minima of the functional \eqref{eq:tik} we consider regularized solutions $\signal_\al^\delta$ as $\al\phi$-critical  points of $\tik_{\al, \data^\delta}$, meaning
\begin{equation} \label{eq:crit-reg}
	0 \in \partial_{\al \phi} \left(\similarity(\cdot, \data^\delta) + \al \reg(\cdot) \right) (\signal_\al^\delta)
\end{equation}
We will analyze  stability and convergence of such critical points.

For the analysis we make the following assumptions.

\begin{cond}[Critical point regularization] \label{cond:a} \hfill
\begin{enumerate}[label=(C\arabic*), leftmargin=3em, topsep=0em, itemsep=0em]
\item $\X$ is a reflexive Banach spaces and $\Y$ is a metric space with metric $\distance$ \label{cond:a1}
\item $\reg$ is weakly sequentially lower semi-continuous  \label{cond:a2}

\item  $\reg$ is relatively sub-differentiable with bound $\phi$ \label{cond:a2b}
\item $\similarity$ is weakly sequentially lower semi-continuous, convex in its first argument and continuous in its second argument \label{cond:a3}
\item $\exists C > 0~\exists p \geq 1~\forall z \in \X~\forall \data, \data^\delta \in \Y \colon \similarity(z, \data) \leq C \left( \similarity(z, \data^\delta) + \distance(\data, \data^\delta)^p \right)$  \label{cond:a4}
\item $\forall \al > 0$ and $\forall \data^\delta \in \Y$ the functional $\tik_{\al, \data^\delta}$ is coercive, i.e. $\tik_{\al, \data^\delta}(\signal) \to \infty$ for $\norm{\signal} \to \infty$ \label{cond:a5}
\end{enumerate}
\end{cond}

Most of the assumptions in Condition~\ref{cond:a} are classical assumptions (or generalizations thereof), e.g. \cite{scherzer2009variational, obmann2021augmented, li2020nett}, made for the analysis of variational methods. For example, the coercivity assumption only poses a condition on the involved functionals ``at infinity'' and as such does not pose any form of condition, say for example, on the behaviour in a ball around $0$. This means, that the function $\tik$ can be highly non-convex as long as it is growing fast enough outside bounded sets.
The major difference in the analysis provided here is that $\reg$ is relatively sub-differentiable, which we have motivated in Section~\ref{sec:rsd}, and the assumption that in general the regularized solutions are not global minima but only $\phi$-critical  points.

One of the simplest and commonly used example of a similarity measure which satisfies Assumptions~\ref{cond:a3} and \ref{cond:a4} is given by $\similarity(\signal, \data^\delta) = \norm{\Ko \signal - \data^\delta}^p$ whenever $\Ko \colon \X \to \Y$ is the linear forward operator of the underlying inverse problem and $\Y$ is a Banach space. In general, any similarity measure of the form $\norm{\Lo (\Ko \signal - \data^\delta)}^p$ satisfies these assumptions, if $\Lo$ is a linear and bounded operator, e.g. a reweighting of the residual $(\Ko \signal - \data^\delta)$.

We now turn our focus to the stability and convergence analysis of the considered method, i.e. $\signal_\al^\delta \in \crit_{\alpha \phi} \tik_{\al, \data^\delta}$. We start with existence and stability results.

\subsection{Existence and stability}

\begin{theorem}[Existence] \label{thm:existence}
Under Assumption~\ref{cond:a} the problem is well-posed, i.e. for every $\al > 0$ and $\data^\delta \in \Y$ the set $\crit_{\alpha \phi} \tik_{\al, \data^\delta}$ is non-empty.
\end{theorem}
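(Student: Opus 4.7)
The plan is to reduce existence of a $\phi$-critical point to existence of a global minimizer via Lemma~\ref{lemma:basic}\ref{rsg5}, and then to obtain the global minimizer by the standard direct method of the calculus of variations. Since $\similarity, \reg \geq 0$, the functional $\tik_{\al, \data^\delta}$ is bounded below by $0$, so $m \coloneqq \inf_{\signal \in \X} \tik_{\al, \data^\delta}(\signal) \in [0, \infty)$ is finite and a minimizing sequence $(\signal_n)_{n\in\N} \subseteq \X$ with $\tik_{\al, \data^\delta}(\signal_n) \to m$ exists.

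Next I would use the coercivity assumption \ref{cond:a5} to conclude that $(\signal_n)_{n \in \N}$ is bounded: if it were unbounded along a subsequence, the coercivity of $\tik_{\al, \data^\delta}$ would force $\tik_{\al, \data^\delta}(\signal_n) \to \infty$, contradicting the assumption that it converges to the finite value $m$. Since $\X$ is reflexive by \ref{cond:a1}, I would then extract a weakly convergent subsequence, still denoted $(\signal_n)$, with $\signal_n \rightharpoonup \signal_\plus$ for some $\signal_\plus \in \X$.

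To pass to the limit I would invoke weak sequential lower semi-continuity of both terms. By \ref{cond:a2} the regularizer satisfies $\reg(\signal_\plus) \leq \liminf_n \reg(\signal_n)$, and by \ref{cond:a3} the similarity measure $\similarity(\cdot, \data^\delta)$ is weakly sequentially lower semi-continuous in its first argument, so $\similarity(\signal_\plus, \data^\delta) \leq \liminf_n \similarity(\signal_n, \data^\delta)$. Adding these, together with $\al > 0$, yields
\begin{equation*}
\tik_{\al, \data^\delta}(\signal_\plus) \leq \liminf_{n \to \infty} \tik_{\al, \data^\delta}(\signal_n) = m,
\end{equation*}
so $\signal_\plus \in \argmin \tik_{\al, \data^\delta}$. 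Finally, since $\al \phi \colon \X \to [0, \infty)$ is a non-negative bound, Lemma~\ref{lemma:basic}\ref{rsg5} applied to $\funk = \tik_{\al, \data^\delta}$ gives $0 \in \partial_{\al \phi} \tik_{\al, \data^\delta}(\signal_\plus)$, i.e.\ $\signal_\plus \in \crit_{\al \phi} \tik_{\al, \data^\delta}$.

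I do not expect any serious obstacle in this argument: the only mildly delicate step is verifying that the bound $\phi$ (which enters the notion of $\phi$-critical point but not the functional being minimized) plays no role in the existence proof itself. This is exactly what Lemma~\ref{lemma:basic}\ref{rsg5} encapsulates, and it is why no sub-differentiability of $\similarity$ in the first argument needs to be assumed beyond convexity in \ref{cond:a3}. Everything else is a textbook application of coercivity plus weak lower semi-continuity in a reflexive Banach space.
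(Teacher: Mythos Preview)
Your proposal is correct and follows exactly the approach the paper takes: the paper's proof simply notes that existence is an immediate consequence of the existence of minimizers of $\tik_{\al, \data^\delta}$ (via coercivity and weak lower semi-continuity, with details deferred to \cite{scherzer2009variational}), and then implicitly uses Lemma~\ref{lemma:basic}\ref{rsg5}. You have just spelled out the direct-method argument in full rather than citing it.
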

\begin{proof}
This is an immediate consequence of the existence of minimizers of $\tik_{\al, \data^\delta}$ which follows from the coercivity and the continuity assumptions on the functional $\tik_{\al, \data^\delta}$. A more detailed proof can be found in \cite{scherzer2009variational}.
\end{proof}

Clearly, $\al\phi$-critical  points may exist under weaker assumptions than a coercivity assumption. However, the coercivity is an important property in the following analysis which guarantees the existence of a weakly convergent subsequence whenever the sequence is bounded. As such we have also derived existence of $\phi$-critical  points using the coercivity. Extending the current analysis to the case of non-coercive functionals $\tik_{\al, \data^\delta}$ is subject to future work.

Another advantage of using $\phi$-critical points opposed to global minima, besides being numerically and hence practically more tractable for non-convex functionals, is that we have a simple way of talking about ``inexact'' critical points, i.e. points where the gradient is small but not necessarily $0$.
As it turns out, the following analysis can be performed under the even weaker assumption that the stabilized solutions are ``inexact'' critical points instead of exact critical points.

\begin{theorem}[Stability] \label{thm:stability}
Let $\data^\delta \in \Y, \al > 0$ and $\data_k \to \data^\delta$ and assume that $\signal_k \in \X$ is such that $z_k \in \partial_{\al \phi} \left( \similarity(\cdot, \data_k) + \al \reg(\cdot) \right) (\signal_k)$ with $\norm{z_k} \to 0$ and $\innerprod{z_k, \signal_k} \leq 0$. Then the sequence $(\signal_k)_k$ has a weakly convergent subsequence and the limit $\signal_\plus$ of every weakly convergent subsequence is an $\alpha \phi$-critical  point of $\tik_{\al, \data^\delta}$.
\end{theorem}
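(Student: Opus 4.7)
The plan is to prove the statement in three stages: first establish boundedness of $(\signal_k)$ so that reflexivity yields a weakly convergent subsequence, then pass to the limit in the defining inequality of the relative subgradient to recognise the weak limit as an $\al\phi$-critical point of $\tik_{\al,\data^\delta}$. Throughout I would work directly from the inequality characterising $z_k \in \partial_{\al\phi}(\similarity(\cdot,\data_k)+\al\reg)(\signal_k)$, namely
\begin{equation*}
\similarity(\signal_k,\data_k) + \al\reg(\signal_k) + \innerprod{z_k, u-\signal_k} \leq \similarity(u,\data_k) + \al\reg(u) + \al\phi(u) \qquad \forall u \in \X.
\end{equation*}

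For boundedness I would plug in $u = 0$ and exploit the sign hypothesis $\innerprod{z_k,\signal_k}\leq 0$, which turns the term $-\innerprod{z_k,\signal_k}$ into a non-negative quantity that can be dropped from the left-hand side. This yields $\tik_{\al,\data_k}(\signal_k) \leq \similarity(0,\data_k) + \al(\reg(0)+\phi(0))$. The right-hand side stays bounded because $\data_k\to \data^\delta$ and $\similarity(0,\cdot)$ is continuous on $\Y$ by \ref{cond:a3}. Using non-negativity of $\similarity$ and $\reg$, both $\similarity(\signal_k,\data_k)$ and $\reg(\signal_k)$ are individually bounded. Invoking \ref{cond:a4} then gives boundedness of $\similarity(\signal_k,\data^\delta)$ and therefore of $\tik_{\al,\data^\delta}(\signal_k)$, so the coercivity assumption \ref{cond:a5} forces $(\signal_k)$ to be bounded in $\X$. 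By reflexivity \ref{cond:a1} I extract a (non-relabelled) subsequence with $\signal_k \rightharpoonup \signal_\plus$.

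For the identification of the limit, I fix an arbitrary $u\in\X$ and take $\liminf_k$ in the displayed inequality. The coupling term $\innerprod{z_k, u-\signal_k}$ vanishes since $\norm{z_k}\to 0$ and $(\signal_k)$ is bounded; the right-hand side converges to $\similarity(u,\data^\delta) + \al\reg(u) + \al\phi(u)$ by continuity of $\similarity$ in the second argument; and the joint weak lower semi-continuity of $\similarity$ together with the weak lsc of $\reg$ give
\begin{equation*}
\similarity(\signal_\plus,\data^\delta) + \al\reg(\signal_\plus) \leq \liminf_{k} \bigl(\similarity(\signal_k,\data_k) + \al\reg(\signal_k)\bigr).
\end{equation*}
Combining these and recalling that $u$ was arbitrary yields exactly $0 \in \partial_{\al\phi}\tik_{\al,\data^\delta}(\signal_\plus)$, so $\signal_\plus \in \crit_{\al\phi}\tik_{\al,\data^\delta}$.

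The main obstacle I expect is the boundedness step: the inequality defining a relative subgradient involves the bound $\al\phi(u)$ on the right, which blows the classical ``coercivity plus subgradient'' trick unless one can pin down $u$ at a fixed reference point. Without the hypothesis $\innerprod{z_k,\signal_k}\leq 0$ one would be left with an uncontrolled $-\innerprod{z_k,\signal_k}$ term on the left, which a priori grows like $\norm{z_k}\norm{\signal_k}$ and cannot be absorbed in a circular a priori bound; the sign assumption is exactly what makes this step go through. Once boundedness is secured, the limiting argument is a standard (joint) lower semi-continuity passage and reduces to an unpacking of the hypotheses in Condition~\ref{cond:a}.
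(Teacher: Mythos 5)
Your proposal is correct and follows essentially the same route as the paper's proof: test the relative-subgradient inequality at a fixed $u$, use $\innerprod{z_k,\signal_k}\leq 0$ and \ref{cond:a4} to bound $\tik_{\al,\data^\delta}(\signal_k)$, invoke coercivity and reflexivity for the weak subsequence, then pass to the limit via weak lower semi-continuity and continuity of $\similarity$ in its second argument. The only cosmetic difference is that you specialize to $u=0$ for the a priori bound while the paper keeps $u$ arbitrary (absorbing the extra term as $\norm{z_k}\norm{u}$); both are valid.
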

\begin{proof}
To show the existence of a weakly convergent subsequence, using the reflexivity of $\X$, it is enough to show that $(\signal_k)_k$ is a bounded sequence. By coercivity of $\tik_{\al, \data^\delta}$ it is enough to show that $(\tik_{\al, \data^\delta}(\signal_k))_k$ is bounded. We have for any $u \in \X$
\begin{align*}
    \similarity(\signal_k, \data_k) + \al \reg(\signal_k) + \innerprod{z_k, u - \signal_k} \leq \similarity(u, \data_k) + \al \reg(u) + \al \phi(u)
\end{align*}
and using $\innerprod{z_k, \signal_k} \leq 0$ it follows
\begin{align*}
    \similarity(\signal_k, \data_k) + \al \reg(\signal_k) \leq \similarity(u, \data_k) + \al \reg(u) + \al \phi(u) + \norm{z_k} \norm{u}.
\end{align*}
By assumption on $\similarity$ we have $\similarity(\signal_k, \data^\delta) \leq C(\similarity(\signal_k, \data_k) + \distance(\data_k, \data^\delta)^p)$ which yields
\begin{align*}
    \similarity(\signal_k, \data^\delta) + \al \reg(\signal_k) &\leq C \left( \similarity(\signal_k, \data_k) + \al \reg(\signal_k) + \distance(\data_k, \data^\delta)^p \right)\\
    &\leq C \left( \similarity(u, \data_k) + \al \reg(u) + \al \phi(u) + \norm{z_k} \norm{u} + \distance(\data_k, \data^\delta)^p \right) \\
    &\leq \tilde{C} \left( \similarity(u, \data^\delta) + \al \reg(u) + \al \phi(u) + \norm{z_k} \norm{u} + \distance(\data_k, \data^\delta)^p \right)
\end{align*}
for any $u \in \X$. By assumption we have $\norm{z_k} \to 0$ and $\distance(\data_k, \data^\delta) \to 0$ so the right hand side is bounded for $k$ large enough. This shows that there exists some weakly convergent subsequence.

Let now $(\signal_k)_k$ denote such a subsequence and denote by $\signal_\plus$ its limit. Using the weak lower semi-continuity of the involved functionals it follows for any $u \in \X$
\begin{align*}
    \similarity(\signal_\plus, \data^\delta) + \al \reg(\signal_\plus) &\leq \liminf_k \similarity(\signal_k, \data_k) + \al \reg(\signal_k) + \innerprod{z_k, u - \signal_k}\\
    & \leq \liminf_k \similarity(u, \data_k) + \al \reg(u) + \al \phi(u) + \norm{z_k} \norm{u}\\
    &= \similarity(u, \data^\delta) + \al \reg(u) + \al \phi(u)
\end{align*}
where the last equality follows from continuity of $\similarity$ in its second argument. This shows that $0 \in \partial_{\al \phi} \left( \similarity(\cdot, \data^\delta) + \al \reg(\cdot) \right) (\signal_\plus)$.
\end{proof}

Clearly, whenever $z_k = 0$, i.e. $\signal_k$ is an $\alpha \phi$-critical point, then the assumptions on $z_k$ in Theorem~\ref{thm:stability} are satisfied. It follows that $\alpha \phi$-critical  points are stable in the above sense. However, Theorem~\ref{thm:stability} also shows that we do not need access to exact $\alpha \phi$-critical  points but rather points which are in some sense close to an $\alpha \phi$-critical  point.

\begin{remark}[Inexact critical points obtained by use of minimization schemes] \label{re:inexact}
Consider once again the case of Remark~\ref{re:differentiable} and assume that the $\alpha \phi$-critical  points are obtained by using gradient descent or any other algorithm which finds zeros of the gradient. Then we have $z_k = \similarity'(\signal_k, \data_k) + \al \reg'(\signal_k)$ and whenever $\norm{z_k} \to 0$ and $\innerprod{z_k, \signal_k} \leq 0$ we have that the considered points have a weakly convergent subsequence.
For practical applications this means, that we have an easily verifiable condition which can be used as a kind of stopping criterion when searching for critical points. As a consequence, we do not have to guarantee that the regularized solutions are critical points but rather are ``close'' to being a critical point.
\end{remark}

\subsection{Convergence}

The next goal is to show the convergence of the regularized solutions to a solution of the original problem in the case that the noise-level $\delta$ tends to $0$. Here, we call $z \in \X$ an $\similarity$-solution of $\data \in \Y$ if $\similarity(z, \data) = 0$. Like in the case of Theorem~\ref{thm:stability}, the proof can be done under the weaker assumption of only having access to ``inexact'' $\phi$-critical points (see Remark~\ref{re:inexact}).

\begin{theorem}[Convergence] \label{thm:convergence}
Let $\data \in \Y$ and assume it has an $\similarity$-solution. Further, let $\data_k \in \Y$ with $\distance(\data_k, \data) \leq \delta_k$ with $\delta_k \to 0$. Choose $\al = \al(\delta)$ such that for $\al_k = \al(\delta_k)$ we have $\lim_k \al_k = \lim_k \delta_k^p / \al_k = 0$. Assume that the regularized solutions $\signal_k \in \X$ are such that $z_k \in \partial_{\al_k \phi} \left( \similarity(\cdot, \data_k) + \al_k \reg(\cdot) \right) (\signal_k)$ with $\norm{z_k} / \al_k \to 0$ and $\innerprod{z_k, \signal_k} \leq 0$. \\
Then the sequence $(\signal_k)_k$ has a weakly convergent subsequence and the limit $\signal_\plus$ of any such sequence is an $\similarity$-solution of $\data$. Moreover, we have $\reg(\signal_\plus) \leq \reg(u) + \phi(u)$ for any $\similarity$-solution $u$.
Finally, whenever the $\similarity$-solution is unique then $(\signal_k)_k$ converges weakly to this solution.
\end{theorem}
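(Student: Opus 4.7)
The plan is to mimic the proof of Theorem~\ref{thm:stability}, but now comparing against an $\similarity$-solution $u^\ast$ of $\data$ and dividing by $\al_k$ to exploit the parameter choice. First I write down the relative sub-differential inequality defining $z_k$: for every $u \in \X$,
\begin{equation*}
    \similarity(\signal_k, \data_k) + \al_k \reg(\signal_k) + \innerprod{z_k, u - \signal_k} \leq \similarity(u, \data_k) + \al_k \reg(u) + \al_k \phi(u).
\end{equation*}
Using $\innerprod{z_k, \signal_k} \leq 0$ and Cauchy--Schwarz on $\innerprod{z_k, u}$, then substituting $u = u^\ast$ and invoking \ref{cond:a4} to bound $\similarity(u^\ast, \data_k) \leq C \distance(\data_k, \data)^p \leq C \delta_k^p$, I obtain
\begin{equation*}
    \similarity(\signal_k, \data_k) + \al_k \reg(\signal_k) \leq C \delta_k^p + \al_k (\reg(u^\ast) + \phi(u^\ast)) + \norm{z_k}\norm{u^\ast}.
\end{equation*}
Dropping either non-negative summand on the left and dividing the $\reg$-inequality by $\al_k$ yields, thanks to $\delta_k^p/\al_k \to 0$ and $\norm{z_k}/\al_k \to 0$, both $\similarity(\signal_k, \data_k) \to 0$ and $\limsup_k \reg(\signal_k) \leq \reg(u^\ast) + \phi(u^\ast)$.

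Next I establish boundedness of $(\signal_k)$. Applying \ref{cond:a4} in the other direction gives $\similarity(\signal_k, \data) \leq C(\similarity(\signal_k, \data_k) + \delta_k^p) \to 0$, so in particular $\similarity(\signal_k, \data)$ is bounded; combined with the bound on $\reg(\signal_k)$, this shows that $\tik_{\al_0, \data}(\signal_k) = \similarity(\signal_k, \data) + \al_0 \reg(\signal_k)$ is bounded for any fixed $\al_0 > 0$. Coercivity \ref{cond:a5} then forces $(\signal_k)$ to be bounded in $\X$, and reflexivity \ref{cond:a1} yields a weakly convergent subsequence $\signal_{k_j} \rightharpoonup \signal_\plus$.

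For the limit I use weak sequential lower semi-continuity. On the one hand, $\similarity(\signal_\plus, \data) \leq \liminf_j \similarity(\signal_{k_j}, \data) = 0$ by \ref{cond:a3}, so $\similarity(\signal_\plus, \data) = 0$ and $\signal_\plus$ is an $\similarity$-solution. On the other hand, \ref{cond:a2} and the $\limsup$-bound above give $\reg(\signal_\plus) \leq \liminf_j \reg(\signal_{k_j}) \leq \limsup_j \reg(\signal_{k_j}) \leq \reg(u^\ast) + \phi(u^\ast)$, for any $\similarity$-solution $u^\ast$.

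Finally, uniqueness of the $\similarity$-solution upgrades subsequential convergence to convergence of the whole sequence by the standard argument: every subsequence of $(\signal_k)$ is, by the preceding reasoning applied to that subsequence, bounded and therefore admits a further weakly convergent subsequence whose limit must coincide with the unique $\similarity$-solution; thus every subsequence has a subsequence converging to the same weak limit, so the full sequence converges weakly to it. The main obstacle I expect is the boundedness step, since coercivity is only assumed for each fixed $\al$; the trick above of decoupling by picking an arbitrary fixed $\al_0 > 0$ and controlling $\similarity(\signal_k, \data)$ and $\reg(\signal_k)$ separately is what makes this work despite the varying regularization parameter.
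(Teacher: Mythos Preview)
Your proof is correct and follows essentially the same route as the paper's: the same relative sub-differential inequality at an $\similarity$-solution, the same use of \ref{cond:a4} to get $\similarity(\signal_k,\data_k)\to 0$ and $\limsup_k \reg(\signal_k)\leq \reg(u^\ast)+\phi(u^\ast)$, the same conversion $\similarity(\signal_k,\data)\leq C(\similarity(\signal_k,\data_k)+\delta_k^p)$, and the same coercivity/weak lsc conclusion. The only cosmetic difference is that the paper fixes $\al^\plus=\max_k \al_k$ for the coercivity step whereas you pick an arbitrary $\al_0>0$; both work equally well under \ref{cond:a5}.
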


\begin{proof}
Similar to the stability proof we show that $(\signal_k)_k$ is bounded by using the coercivity of the functionals $\tik_{\al, \data^\delta}$. Following the above proof we find for any $u \in \X$
\begin{align*}
    \similarity(\signal_k, \data_k) + \al_k \reg(\signal_k) \leq \similarity(u, \data_k) + \al_k \reg(u) + \al_k \phi(u) + \norm{z_k} \norm{u}
\end{align*}
and by choosing $u$ such that $\similarity(u, \data) = 0$ we find $\similarity(u, \data_k) \leq C \delta_k^p$ which implies
\begin{align*}
    \similarity(\signal_k, \data_k) + \al_k \reg(\signal_k) \leq C \delta_k^p + \al_k (\reg(u) + \phi(u)) + \norm{z_k} \norm{u}.
\end{align*}
Since both $\similarity$ and $\reg$ are non-negative it then follows
\begin{align*}
    \lim_k \similarity(\signal_k, \data_k) &= 0\\
    \limsup_k \reg(\signal_k) &\leq \reg(u) + \phi(u),
\end{align*}
where we have used the assumptions $\lim_k \delta_k^p / \al_k = \lim_k \norm{z_k}/ \al_k = 0$. This shows that $(\reg(\signal_k))_k$ is a bounded sequence and using once again $\similarity(\signal_k, \data) \leq C \left( \similarity(\signal_k, \data_k) + \delta_k^p \right)$ we find that for $\al^\plus = \max \set{\al_k \colon k \in \N}$ the sequence $(\similarity(\signal_k, \data) + \al^\plus \reg(\signal_k))_k$ is bounded. Using the coercivity of $\similarity(\cdot, \data) + \al^\plus \reg(\cdot)$ we get that the sequence $(\signal_k)_k$ is bounded and hence has a weakly convergent subsequence.

Finally, using the weak lower-semicontinuity of $\similarity$ and $\reg$ we have that for any such weakly convergent subsequence with limit $\signal_\plus$
\begin{align*}
    \similarity(\signal_\plus, \data) &\leq \liminf_k \similarity(\signal_k, \data_k) = 0\\
    \reg(\signal_\plus) &\leq \liminf_k \reg(\signal_k) \leq \reg(u) + \phi(u)
\end{align*}
for any $u \in \X$ with $\similarity(u, \data) = 0$.

Whenever the solution is unique, then every subsequence of $(\signal_k)_k$ has a subsequence converging to this solution. This shows that $(\signal_k)_k$ converges weakly to the unique solution.
\end{proof}

At this point, we want to emphasize once again, that the assumptions on the choice of points $\signal_k$ in Theorem~\ref{thm:convergence} are weaker than the assumption that $\signal_k$ is an $\alpha_k \phi$-critical point and that in particular the analysis also holds for these points.

Since for this section we only assume that $\reg$ is relatively sub-differentiable without explicit knowledge of the bound $\phi$ Theorem~\ref{thm:convergence} gives a somewhat intangible condition on the type of solutions we obtain in the limit $\delta \to 0$. A more tangible condition, and more importantly one independent of $\phi$, is given by the next theorem, where we assume a separability condition on the gradients $z_k \in \partial_{\al_k \phi} \left( \similarity(\cdot, \data_k) + \al_k \reg(\cdot) \right) (\signal_k)$.
This separability assumption can be satisfied in many cases, e.g. when $\similarity$ and $\reg$ are (relatively sub-)differentiable and the $\phi$-critical points arise due to some algorithm such as gradient descent. Using these algorithms we are often in the situation that $z_k = s_k + \al_k r_k$ where $s_k$ is a (sub-)gradient of $\similarity$ and $r_k$ is an (relatively sub-differentiable sub-)gradient of $\reg$. Assuming that the gradients $(r_k)_k$ of $\reg$ have a cluster point, we get the additional following property of these cluster points.

\begin{theorem}[Normality property of the solution] \label{thm:normality}
Let the same assumptions as in Theorem~\ref{thm:convergence} hold and denote by $(\signal_k)_k$ a weakly convergent subsequence with limit $\signal_\plus$. Let $z_k = s_k + \al_k r_k$ where $s_k \in \partial_0 \similarity(\cdot, \data_k) (\signal_k)$ and $r_k \in \partial_\phi \reg (\signal_k)$. \\
Then any cluster point $r$ of the sequence $(r_k)_k$ satisfies $-r \in -\partial_\phi \reg (\signal_\plus) \cap N_{L(\data)}(\signal_\plus)$, where $N_{L(\data)}(\signal_\plus)$ is the normal cone of the convex set of all $\similarity$-solutions of $\data$ at $\signal_\plus$.
\end{theorem}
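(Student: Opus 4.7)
The plan is to establish the two containments $r \in \partial_\phi \reg(\signal^\plus)$ and $-r \in N_{L(\data)}(\signal^\plus)$ separately, both by passing to the limit along a subsequence $K \subseteq \N$ on which $r_k$ converges to $r$. The first containment is a direct application of Lemma~\ref{lemma:basic}(\ref{rsg8}) with $p_k = r_k$ and vanishing perturbation $z_k = 0$: combining the weak convergence $\signal_k \rightharpoonup \signal^\plus$ supplied by Theorem~\ref{thm:convergence} with the weak lower semi-continuity of $\reg$ from \ref{cond:a2} and the convergence $r_k \to r$ along $K$ immediately yields $r \in \partial_\phi \reg(\signal^\plus)$.

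The main work is in showing $-r \in N_{L(\data)}(\signal^\plus)$, where I exploit convexity of $\similarity(\cdot, \data_k)$ from \ref{cond:a3}. Since $s_k = z_k - \al_k r_k$ is a classical subgradient of $\similarity(\cdot, \data_k)$ at $\signal_k$, the subgradient inequality gives, for any $u \in \X$,
\begin{equation*}
\similarity(\signal_k, \data_k) + \innerprod{z_k - \al_k r_k, u - \signal_k} \leq \similarity(u, \data_k).
\end{equation*}
Rearranging, dividing by $\al_k > 0$, specializing to $u \in L(\data)$ so that \ref{cond:a4} provides $\similarity(u, \data_k) \leq C \delta_k^p$, and dropping the non-negative term $\similarity(\signal_k, \data_k) \geq 0$ on the right, I obtain
\begin{equation*}
\innerprod{r_k, u - \signal_k} \geq -\frac{C \delta_k^p}{\al_k} + \frac{\innerprod{z_k, u - \signal_k}}{\al_k}.
\end{equation*}
Since $(\signal_k)_k$ is bounded (as established in the proof of Theorem~\ref{thm:convergence}) and $\delta_k^p / \al_k \to 0$, $\norm{z_k}/\al_k \to 0$ by hypothesis, the right-hand side tends to $0$. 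Taking the limit along $K$ produces $\innerprod{r, u - \signal^\plus} \geq 0$, equivalently $\innerprod{-r, u - \signal^\plus} \leq 0$ for every $u \in L(\data)$. Since $\similarity(\cdot, \data)$ is convex, the zero level set $L(\data)$ is convex, and $\signal^\plus \in L(\data)$ by Theorem~\ref{thm:convergence}; hence this is exactly the defining property of $-r \in N_{L(\data)}(\signal^\plus)$.

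The main technical subtlety lies in the passage to the limit in the pairing $\innerprod{r_k, u - \signal_k}$: if both $(r_k)_k$ and $(\signal_k)_k$ converge only weakly, the inner product need not converge. This is resolved exactly as in the two alternatives of Lemma~\ref{lemma:basic}(\ref{rsg8}), by assuming that at least one of the convergences along $K$ is strong. In that setting the decomposition $\innerprod{r_k, u - \signal_k} = \innerprod{r_k, u} - \innerprod{r_k, \signal_k}$ converges to $\innerprod{r, u - \signal^\plus}$ as required, and the limiting inequality is obtained without further effort.
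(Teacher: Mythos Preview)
Your proof is correct and follows essentially the same route as the paper's: establish $r \in \partial_\phi \reg(\signal_\plus)$ via the closure property of relative subgradients (Lemma~\ref{lemma:basic}\ref{rsg8}), then use the convex subgradient inequality for $\similarity(\cdot,\data_k)$ together with $\similarity(u,\data_k)\leq C\delta_k^p$, $\delta_k^p/\al_k\to 0$ and $\norm{z_k}/\al_k\to 0$ to obtain the normal-cone inequality in the limit. The only noteworthy differences are cosmetic: the paper controls the $z_k$-term by splitting $\innerprod{-z_k,u-\signal_k}\leq \norm{z_k}\norm{u}+\innerprod{z_k,\signal_k}$ and invoking the sign hypothesis $\innerprod{z_k,\signal_k}\leq 0$, whereas you bound it via $\norm{z_k}\norm{u-\signal_k}$ and the boundedness of $(\signal_k)_k$; and you make explicit the need for one of the two convergences $r_k\to r$, $\signal_k\to\signal_\plus$ to be strong in order to pass to the limit in $\innerprod{r_k,u-\signal_k}$, a point the paper uses tacitly.
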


\begin{proof}
Let $r$ be a cluster point of the sequence $(r_k)_k$. Then by weak lower semi-continuity of $\reg$ and by assumption on $r_k$ we have
\begin{align*}
    \reg(\signal_\plus) + \innerprod{r, u - \signal_\plus} &\leq \liminf_k \reg(\signal_k) + \innerprod{r_k, u - \signal_k} \leq \reg(u) + \phi(u),
\end{align*}
which shows that $r \in \partial_\phi \reg (\signal_\plus)$.

Now assume that $u \in \X$ is such that $\similarity(u, \data) = 0$. Then we have $\similarity(u, \data_k) \leq C \delta_k^p$ and it follows
\begin{align*}
    \innerprod{-r, u - \signal_\plus} &= \lim_k \innerprod{\frac{s_k - z_k}{\al_k}, u - \signal_k} \\
    &\leq \lim_k \frac{1}{\al_k} \left( \similarity(u, \data_k) - \similarity(\signal_k, \data_k) + \norm{z_k} \norm{u} + \innerprod{z_k, \signal_k} \right)\\
    &\leq \lim_k \frac{1}{\al_k} \left( \similarity(u, \data_k) + \norm{z_k} \norm{u}  \right) \\
    &\leq \lim_k C \frac{\delta_k^p}{\al_k} + \norm{u} \frac{\norm{z_k}}{\al_k}\\
    &= 0,
\end{align*}
where we have used the convexity of $\similarity$ in its first argument and the assumption on the limits of the sequences $(\al_k)_k$ and $\left( \norm{z_k} / \al_k \right)_k$.
\end{proof}

Theorem~\ref{thm:normality} shows that the solution $\signal_\plus$ obtained by critical point regularization satisfies some form of first order optimality conditions, see e.g. \cite{rockafellar2015convex}.

Also note that in the case where $\reg$ is convex and we choose $\phi = 0$, both properties in Theorem~\ref{thm:convergence} and \ref{thm:normality} reduce to the common property that $\signal_\plus$ is an $\reg$-minimizing solution, i.e. $\reg(\signal_\plus) \leq \reg(u)$ for any $u \in \X$ with $\similarity(u, \data) = 0$.

\begin{remark}[Convex regularizers]
Clearly, any sub-differentiable convex function is relatively sub-differentiable with the choice $\phi = 0$. Nevertheless, one could also choose $\phi = \varepsilon > 0$. With this choice we see that the results in Theorem~\ref{thm:convergence} roughly state that the solutions $\signal_\plus$ we approximate by using critical point regularization are $\reg$-minimizing solutions up to an error $\varepsilon$ whenever the regularized solutions are minimizers up to an error of $\varepsilon$. \\
This result, as opposed to classical variational regularization theory e.g. \cite{scherzer2009variational}, has the advantage that at no point do we require exact global minimizers of the functionals $\tik_{\al, \data}$ but only approximate minimizers, which may be more easily reachable in practical applications.
Consider for example the case where we employ an iterative algorithm which has convergence guarantees of the form $\funk(\signal_n) - \funk(\signal_*) \leq C / n^r$ for the $n$-th iterate and $\signal_*$ being a minimizer of $\funk$. Applying this algorithm to $\funk = \tik_{\al, \data}$ and requiring that $C / n^r \leq \alpha \varepsilon$ in order to get that $\signal_n$ is an $\alpha \phi$-critical point, we see that the above theory shows that one might stop the iterative algorithm after a finite amount of steps, i.e. we do not necessarily need to run the algorithm until it converges and we still get a stable and convergent regularization method.\\
At first it might seem that a disadvantage of this is that we do not achieve an $\reg$-minimizing solutions in the limit. However, this can also be circumvented by considering a variable $\varepsilon$. To be more precise, following the proof of Theorem~\ref{thm:convergence} with $\varepsilon = \varepsilon(\delta)$ and the condition $\varepsilon(\delta) \to 0$ as $\delta \to 0$, it is easy to see that to obtain a sequence $(\signal_k)_k$ weakly converging to an $\reg$-minimizing solution it is enough to run the iterative algorithm for a number of iterations steps $n_k$ such that $C / n_k^r \leq \alpha_k \varepsilon_k$.
\end{remark}

We next discuss another special case of our analysis which pertains to functionals such as the one in Remark~\ref{re:examples}.

\subsection{Differentiable regularizers and classical critical points}

In this subsection we consider the important special case where the $\phi$-critical points are given by classical critical points, i.e. by points $\signal$ for which $\tik_{\al, \data}'(\signal) = 0$ and we give stability and convergence results for this case. To this end, we assume that the bound $\phi$ can be constructed in such a way that $\reg(\signal) + \innerprod{\reg'(\signal), u - \signal} \leq \reg(u) + \phi(u)$, see e.g. Remark~\ref{re:differentiable}. Then, if $\similarity$ is differentiable in its first argument, by convexity of $\similarity$, we have $\similarity'(\signal, \data^\delta) + \al \reg'(\signal) \in \partial_{\al \phi} (\similarity(\cdot, \data^\delta) + \al \reg(\cdot))(\signal)$. This shows, that whenever we employ some algorithm which finds a classical critical point, we also obtain an $\alpha \phi$-critical point in the sense of Definition~\ref{def:errorconvex} which satisfies the separability assumption necessary for Theorem~\ref{thm:normality}. In particular, these points are amenable to the analysis above.

Nevertheless, the analysis relies on an abstract concept of $\phi$-critical points and even in the case where the involved functionals are differentiable we cannot guarantee that the limits will again be $\phi$-critical points without any additional assumptions. In order to guarantee this we need the assumption that $\similarity'$ and $\reg'$ are weakly (sequentially) continuous. Combining the above theorems we then get the following result.

\begin{proposition}[Existence, stability and convergence for classical critical points] \label{prop:convergence}
Assume that $\similarity$ and $\reg$ are differentiable with weakly continuous derivatives and let Condition~\ref{cond:a} hold. Moreover, let $\data, \data^\delta \in \Y$ and $\al > 0$ and assume that $\data$ has an $\similarity$-solution. Then the following hold
\begin{enumerate}
    \item \textbf{Existence:} $\tik_{\al, \data^\delta}$ has at least one $\phi$-critical  point.
    \item \textbf{Stability:} If $(\data_k)_k \subseteq \Y$ is a sequence converging to $\data^\delta$ and $\signal_k$ is such that $z_k = \similarity'(\signal_k, \data_k) + \al \reg'(\signal_k) \to 0$ as $k \to \infty$ and $\innerprod{z_k, \signal_k} \leq 0$. Then $(\signal_k)_k$ has a weakly convergent subsequence and any weak clusterpoint $\signal_\plus$ of $(\signal_k)_k$ is a critical point of $\tik_{\al, \data^\delta}$.
    \item \textbf{Convergence:} Let $(\data_k)_k \subseteq \Y$ be a sequence with $\distance(\data_k, \data) \leq \delta_k$ and $\al = \al(\delta)$ be such that for $\al_k = \al(\delta_k)$ we have $\lim_k \al_k = \lim_k \delta_k^p / \al_k = 0$. Then, if we choose $\signal_k \in \X$ such that $z_k = \similarity'(\signal_k, \data_k) + \al_k \reg'(\signal_k)$ satisfies $\lim_k \norm{z_k}/ \al_k = 0$ and $\innerprod{z_k, \signal_k} \leq 0$ the sequence $(\signal_k)_k$ has at least one weak clusterpoint and any such clusterpoint $\signal_\plus$ is an $\similarity$-solution of $\data$ with the following additional properties
    \begin{enumerate}
        \item $\reg(\signal_\plus) \leq \inf_{\similarity(u, \data) = 0} \reg(u) + \phi(u)$
        \item $\innerprod{-\reg'(\signal_\plus), z - \signal_\plus} \leq 0$ for any $z \in \X$ with $\similarity(z, \data) = 0$, i.e. $-\reg'(\signal_\plus) \in N_{L(\data)}(\signal_\plus)$.
    \end{enumerate}
    Finally, whenever the $\similarity$-solution is unique then $(\signal_k)_k$ converges weakly to this solution.
\end{enumerate}
\end{proposition}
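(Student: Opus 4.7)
My plan is to reduce each of the three claims in Proposition~\ref{prop:convergence} to the general theorems already established in this section, after verifying the hypothesis that the classical critical-point condition indeed produces an element of the $\al\phi$-relative subdifferential of the Tikhonov functional. The key preliminary observation is the following: since $\similarity(\cdot, \data)$ is convex and differentiable, we have $\similarity'(\signal,\data) \in \partial_0 \similarity(\cdot,\data)(\signal)$; since $\reg$ is differentiable and $\phi$ is chosen as in Remark~\ref{re:differentiable}, we have $\reg'(\signal) \in \partial_\phi \reg(\signal)$. Invoking Lemma~\ref{lemma:basic}\ref{rsg1}\ref{rsg2}, one obtains $\similarity'(\signal,\data) + \al \reg'(\signal) \in \partial_{\al\phi}(\similarity(\cdot,\data) + \al \reg(\cdot))(\signal)$, so any classical critical point is automatically an $\al\phi$-critical point and any near-zero-gradient sequence meets the hypotheses of the abstract theorems.

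For existence, I would argue that Condition~\ref{cond:a} gives weak lower semicontinuity and coercivity of $\tik_{\al, \data^\delta}$, so a global minimizer $\signal_* $ exists (this is exactly Theorem~\ref{thm:existence}), and by Lemma~\ref{lemma:basic}\ref{rsg5} this minimizer belongs to $\crit_{\al\phi} \tik_{\al, \data^\delta}$. For stability, set $z_k \coloneqq \similarity'(\signal_k, \data_k) + \al \reg'(\signal_k)$. By the preliminary observation, $z_k \in \partial_{\al\phi}(\similarity(\cdot, \data_k) + \al \reg(\cdot))(\signal_k)$, and the assumptions $\|z_k\| \to 0$ and $\langle z_k, \signal_k\rangle \leq 0$ are exactly the hypotheses of Theorem~\ref{thm:stability}. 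Hence $(\signal_k)_k$ admits a weakly convergent subsequence whose limit $\signal_\plus$ is $\al\phi$-critical. To upgrade this to a classical critical point, I would use weak continuity of $\similarity'$ and $\reg'$: along the subsequence, $\similarity'(\signal_k, \data_k) + \al \reg'(\signal_k) \rightharpoonup \similarity'(\signal_\plus, \data^\delta) + \al \reg'(\signal_\plus)$, while by hypothesis this sequence converges strongly to $0$, so uniqueness of weak limits forces $\tik_{\al, \data^\delta}'(\signal_\plus) = 0$.

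For convergence, Theorem~\ref{thm:convergence} applied to the same $z_k$ already delivers a weak cluster point $\signal_\plus$ that is an $\similarity$-solution of $\data$ together with the bound $\reg(\signal_\plus) \leq \reg(u) + \phi(u)$ for every $\similarity$-solution $u$, which is (a). For (b), I would invoke Theorem~\ref{thm:normality} with the natural splitting $s_k = \similarity'(\signal_k, \data_k) \in \partial_0 \similarity(\cdot, \data_k)(\signal_k)$ and $r_k = \reg'(\signal_k) \in \partial_\phi \reg(\signal_k)$. Weak continuity of $\reg'$ implies $r_k \rightharpoonup \reg'(\signal_\plus)$, so $\reg'(\signal_\plus)$ is the (unique) cluster point of $(r_k)_k$, and Theorem~\ref{thm:normality} yields $-\reg'(\signal_\plus) \in N_{L(\data)}(\signal_\plus)$. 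Uniqueness of the $\similarity$-solution implies that every subsequence has a further subsequence converging to that solution, which gives the final weak convergence claim.

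The main obstacle I anticipate is the last step of the stability argument: passing from the fact that $\signal_\plus$ is merely $\al\phi$-critical (the statement delivered directly by Theorem~\ref{thm:stability}) to the classical condition $\tik_{\al, \data^\delta}'(\signal_\plus) = 0$. This passage is where the extra hypothesis of weak continuity of $\similarity'$ and $\reg'$ is essential, and some care must be taken because $\data_k$ varies with $k$, so I have to ensure that weak continuity of $\similarity'$ is understood jointly (or combined with the continuity of $\similarity$ in its second argument from \ref{cond:a3}) to conclude $\similarity'(\signal_k, \data_k) \rightharpoonup \similarity'(\signal_\plus, \data^\delta)$. The same subtlety recurs when identifying the cluster point of $(r_k)_k$ in the convergence part, but there only the weak continuity of $\reg'$ in its single argument is needed.
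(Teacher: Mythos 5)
Your proposal is correct and follows essentially the same route as the paper, whose proof consists of exactly the reduction you describe: the preliminary observation that $\similarity'(\signal,\data)+\al\reg'(\signal)\in\partial_{\al\phi}(\similarity(\cdot,\data)+\al\reg(\cdot))(\signal)$ (stated in the paragraph preceding the proposition) followed by an application of Theorems~\ref{thm:existence}, \ref{thm:stability}, \ref{thm:convergence} and \ref{thm:normality}. In fact you are more explicit than the paper about the one genuinely delicate step---using weak continuity of $\similarity'$ and $\reg'$ (jointly with $\data_k\to\data^\delta$) to upgrade the $\al\phi$-critical limit to a classical critical point---which the paper only motivates in the surrounding text and leaves implicit in its one-line proof.
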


\begin{proof}
This follows immediately by applying Theorems~\ref{thm:existence},~\ref{thm:stability}, \ref{thm:convergence} and \ref{thm:normality}.
\end{proof}

For the differentiable case this identifies the limiting problem we solve by regularizing the inverse problems with critical points, i.e. in the limit we find solutions which satisfies a first order optimality condition of the constrained optimization problem
\begin{align*}
    \inf_u \reg(u) \quad \textrm{such that} \quad \similarity(u, \data) = 0.
\end{align*}


We now briefly discuss the special case where $\similarity$ is given as the norm-discrepancy, e.g. in the case where $\Y$ is a Hilbert-space.

\begin{lemma}[Solution for norm discrepancy] \label{lemma:convergence}
Let the same assumptions as in Proposition~\ref{prop:convergence} hold and assume that $\similarity(\signal, \data^\delta) = \frac{1}{p} \norm{\Ko \signal - \data^\delta}_\Y^p$ for some $p > 1$ where $\Ko \colon \X \to \Y$ is a linear and bounded forward operator between Banach spaces and $\norm{\cdot}_\Y^p$ is differentiable. Furthermore, denote by $\signal_\plus$ a solution according to Proposition~\ref{prop:convergence}. \\
Then we have $-\reg'(\signal_\plus) \in \ker(\Ko)^\perp = \set{p \in \X^\ast \colon \forall \signal_0 \in \ker(\Ko) \colon \innerprod{p, \signal_0} = 0}$.
\end{lemma}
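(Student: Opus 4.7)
The plan is to invoke the normality property from Proposition~\ref{prop:convergence}(3)(b) and exploit the fact that, for the norm discrepancy, the solution set $L(\data) = \{u \in \X \colon \Ko u = \data\}$ is an affine subspace parallel to $\ker(\Ko)$. Since $\signal_\plus$ is itself an $\similarity$-solution (so $\Ko \signal_\plus = \data$), for any $\signal_0 \in \ker(\Ko)$ both $\signal_\plus + \signal_0$ and $\signal_\plus - \signal_0$ lie in $L(\data)$.

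First I would apply the normal cone inequality $\innerprod{-\reg'(\signal_\plus), z - \signal_\plus} \leq 0$ for all $z \in L(\data)$, specialized to the two choices $z = \signal_\plus \pm \signal_0$. These give $\innerprod{-\reg'(\signal_\plus), \signal_0} \leq 0$ and $\innerprod{-\reg'(\signal_\plus), -\signal_0} \leq 0$ simultaneously, which forces $\innerprod{-\reg'(\signal_\plus), \signal_0} = 0$. Since $\signal_0 \in \ker(\Ko)$ was arbitrary, this yields $-\reg'(\signal_\plus) \in \ker(\Ko)^\perp$ in the sense of the annihilator as defined in the statement.

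The only minor preliminary step is to justify that $L(\data) = \signal_\plus + \ker(\Ko)$, which is immediate once we observe $\similarity(u, \data) = 0 \iff \Ko u = \data$ (from $\norm{\cdot}_\Y^p$ being a norm to a positive power) and subtract off $\signal_\plus$. There is no real obstacle here; the work is entirely in noticing that for the norm-discrepancy the solution set is affine, so the normal cone collapses to the annihilator of its direction subspace. The result is a direct consequence of Proposition~\ref{prop:convergence} and the linearity of $\Ko$.
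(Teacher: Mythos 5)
Your proof is correct and follows exactly the paper's argument: both use the normality property of Proposition~\ref{prop:convergence} applied to $z = \signal_\plus \pm \signal_0$ for $\signal_0 \in \ker(\Ko)$, exploiting that the solution set is the affine subspace $\signal_\plus + \ker(\Ko)$.
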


\begin{proof}
Any solution $z$ can be written as $z = \signal_\plus + \signal_0$ where $\signal_0 \in \ker(\Ko)$. By using $x_0$ and $-x_0$, Proposition~\ref{prop:convergence} shows that $\innerprod{-\reg'(\signal_\plus), \signal_0} = 0$ for any $\signal_0 \in \ker(\Ko)$ and hence the claim follows.
\end{proof}


\section{Examples and comparison}
 \label{sec:discussion}

In this section we compare the proposed regularization concept using   $\al \phi$-critical points to standard  Tikhonov regularization, its  convex relaxation and discuss the influence of different choices for $\phi$.  Moreover, we discuss ReLU-regularizers as a class of  non-convex  regularizers for which the presented theory is applicable.

\subsection{Dependence on the choice of  $\phi$}
\label{ssec:convex}

We begin this section with a comparison between classical Tikhonov regularization and  regularization with $\phi$-critical points using different choices for $\phi$. For the sake of simplicity we consider a convex example, where the data-fidelity is chosen as $\norm{\Ko \signal - \data_\delta}^2/2$ and the regularizer is given by $\reg(\signal) =  \norm{\signal}^2/2$. We assume that $\al > 0$ and $\data_\delta \in \Y$ are fixed and denote by $\signal_{\al, \delta}$ the unique minimizer of the Tikhonov functional $ \tik_{\al, \data_\delta}(\signal) =  \norm{\Ko \signal - \data_\delta}^2/2 + \al  \norm{\signal}^2/2$. Note that in the convex case, standard  Tikhonov regularization corresponds to regularization with $\al\phi$-critical points for the choice $\phi \equiv 0$.

Consider now the case where $\phi \equiv \varepsilon > 0$  is constant.  Then, according to the general theory, any $\al\phi$-critical point $\signal$ of $\tik_{\al, \data_\delta}$  is characterized by
\begin{align*} 
    \tik(\signal)
    \leq \min_{u} \tik_{\al, \data_\delta}(u) + \al \phi(u)
    = \min_{u} \tik_{\al, \data_\delta}(u) + \al \varepsilon
    =  \tik_{\al, \data_\delta}(\signal_{\al, \delta}) + \al \varepsilon\,.
\end{align*}
Writing $\signal = \signal_{\al, \delta} + \signal_0$ we find after some rearrangements  that this is in turn equivalent to 
\begin{equation*} 
  \innerprod{\signal_0, r(\al)} + \frac{1}{2} \innerprod{\signal_0, L(\al) \signal_0} \leq \al \varepsilon \,,
  \end{equation*}
   where $L(\al) \coloneqq  (\Ko^* \Ko + \al I)$ and $r(\al) \coloneqq  L(\al) \signal_{\al, \delta} - \Ko^* \data_\delta$. Since $L(\al)$ is positive definite, this shows that $\signal_0$ has to be chosen in an ellipsoid around $0$ and thus $\signal = \signal_{\al, \delta} + \signal_0$ is contained in some ellipsoid around $\signal_{\al, \delta}$. Here, the size of the ellipsoid depends on the choice $\varepsilon$ and, for example, choosing $\varepsilon = 0$ leads to the singleton $\set{\signal_{\al, \delta}}$.

Let now $\phi$ be  an arbitrary non-negative function such that minimization of $\tik_{\al, \data_\delta}  + \al \phi $ is well-posed with minimizer   $\signal_\phi$. Then, following the  steps above, we find that  $\al\phi$-critical points $\signal = \signal_\phi + \signal_0$ of $\tik_{\al, \data_\delta} $ are characterized by
\begin{equation}\label{eq:phiconvex}
    \innerprod{\signal_0, r(\al)} + \frac{1}{2} \innerprod{\signal_0, L(\al) \signal_0} \leq \al \phi(\signal_\phi),
\end{equation}
where $L(\al)$ is as above and $r(\al) = L(\al) \signal_\phi - \Ko^* \data_\delta$. That is,  the set of $\al\phi$-critical points  is  an ellipsoid around the point $\signal_\phi$ where the size of the ellipsoid depends on the value $\phi(\signal_\phi)$. Depending on the choice of $\phi$ this value can even be $0$. To see this, consider for example the case where $\X = \ell^2(\N)$ and $\phi(\signal) = \beta \,  \norm{\max \set{0, \signal}}^2/2$ for some $\beta > 0$, where the maximum is taken pointwise. Then, if $\signal_{\al, \delta} \leq 0$ pointwise, we find that $\phi(\signal_{\al, \delta}) = 0$ and we arrive at $\signal_\phi = \signal_{\al, \delta}$ and  \eqref{eq:phiconvex} collapses to $\signal_0 = 0$.   In a similar fashion, the  choice $\phi(\signal) = \beta   \, \norm{\max(0, -\signal)}^2 / 2$ for some $\beta > 0$ leads to a point estimate whenever $\signal_{\al, \delta}$ is non-negative and to an ellipsoid if $\signal_{\al, \delta}$ has at least one negative entry.

 While we allow for the whole set of solutions defined as $\al\phi$-critical points of $\tik_{\al, \data_\delta}$, in practical applications one typically only chooses a  specific subset. More precisely, as shown in Section~\ref{ssec:algo} one can apply variants of gradient descent to construct critical points. In the convex situation this might lead to the same solution independent of the choice of $\phi$ since the classical gradient $\partial_0 \tik_{\al, \data_\delta}$ can be used as an $\al\phi$ subgradient for any choice  of $\phi$.  However, in the non-convex case using such algorithms will not find solutions which are global minima but only critical points as we discuss in the following subsection.

\subsection{Relation to convex  relaxation}

Next we investigate the relation between critical point regularization  for  a non-convex  example. Consider  $\X = \Y = \ell^2(\N)$ and $\Ko \signal = (k_i \signal_i)_{i\in \N}$. As non-convex regularizer we use a slightly perturbed double-well potential  $\reg(\signal) = \sum_{i} r_i(\signal_i)$, where for $q \in [1/2, 1)$ and $w_i>0$ we define
\begin{equation*}
    r_i(t) = \begin{cases}
     t^2/2   & \text{ for } t \leq q w_i\\
    (t - w_i)^2 / 2 + (q-1/2) w_i^2  &  \text{ for } t > q w_i \,.
    \end{cases}
\end{equation*}
Here, $q$ controls the perturbation away from $0$ of the local minimum at $w_i$. An illustration of $r_i$ for $w_i=1$ and two different values of $q$  is depicted in Figure~\ref{fig:doublewell}. While at this point the choice of the regularizer might seem somewhat arbitrary and contrived, we will argue later that the regularizer chosen here is a simplified model for a reasonable class of learned regularizers; see the discussion on ReLU-networks in Subsection \ref{sec:relu} and in particular Remark~\ref{re:multiwell}.

\begin{figure}[htb!]
    \centering
    \includegraphics[scale=0.5]{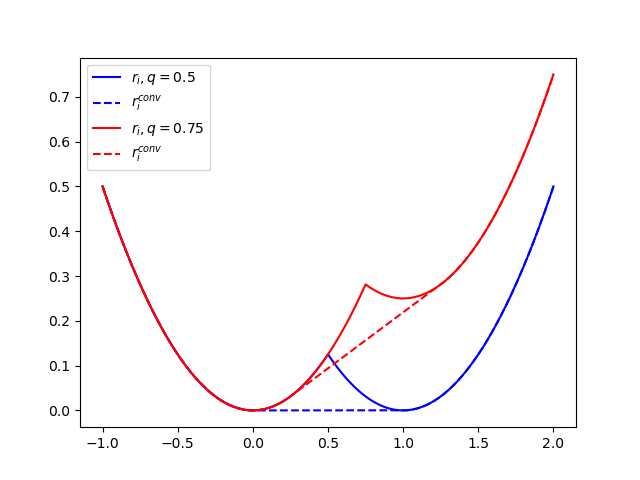}
    \caption{Illustration of the non-convex double-well regularizer (solid lines) for two different values of $q$ and the corresponding convex hull (dashed lines).}
    \label{fig:doublewell}
\end{figure}

For given $\al > 0, \data_\delta \in \Y$  we consider (classical) critical points of the Tikhonov functional $\tik_{\al, \data_\delta}$ defined by  
\begin{equation} \label{eq:critpoint}
    \Ko^*(\Ko \signal - \data_\delta) + \al \nabla \reg(\signal) = 0 \,.
\end{equation}
Although $r_i$ is not differentiable at $t = q w_i$, with slight abuse of notation we will denote by $r_i'(t)$ the derivative of $r_i$, which we define as $q w_i$ at $t = q w_i$. In the following the gradient $\nabla \reg (\signal) = (r_i'(\signal_i))_{i \in \N} $ will be understood with this convention.

\begin{remark}
One  can construct a specific $\phi$ such that  $\nabla \reg(\signal) \in \partial_{\al\phi} \reg (\signal)$. It is then guaranteed that  any classical critical point  is an $\al\phi$-critical point and hence a regularized solution fitting to the theory presented in this paper.  
Such a $\phi$ can be constructed by considering the defining inequality pointwise and constructing $\phi_i$ for $r_i$. The calculations for $\phi_i$ are relatively simple but tedious. Since the exact form of $\phi$ is irrelevant for our purpose, we refrain from explicitly defining   $\phi$. Instead, we focus on solutions that arise when solving equation~\eqref{eq:critpoint}. Further note that the algorithm presented in the Section~\ref{ssec:algo} with $\nabla \reg$ as relative subgradient is the same as classical gradient descent, thus further substantiating the assumption that the constructed regularized solution is a solution of~\eqref{eq:critpoint}.  \end{remark}

By definition of the operator and the regularizer, solutions of~\eqref{eq:critpoint} can be computed component-wise via $\forall i \colon k_i (k _i \signal_i - \data_i) + \al r_i'(\signal_i) = 0$. Hence 
\begin{align*}
        \signal_i = 
        \begin{cases}
        \frac{k_i \data_i}{k_i^2 + \al} 
        & \text{ for } k_i \neq 0 \wedge \frac{k_i \data_i}{k_i^2 + \al} \leq q w_i, 
        \\
        \frac{\al w_i + k_i \data_i}{k_i^2 + \al} 
        & \text{ for }  k_i \neq 0   \wedge \frac{k_i \data_i}{k_i^2 + \al} > q w_i
        \\ 
        \signal_i \in \set{0, w_i}
        & \text{ for }  k_i = 0 \,.
    \end{cases}
\end{align*}
An interesting observation is that for $k_i = 0$ we have a choice between $0$ and $w_i$ where the choice $\signal_i = w_i$ will, in general, only lead to a local minimizer instead of a global one; compare with Figure~\ref{fig:doublewell} for $q = 0.75$. In essence, this shows that the regularized solutions one obtains are a subset of the solutions of equation~\eqref{eq:critpoint} and need not be global minimizers.

Next we consider regularization with the convex relaxation of $\reg$, i.e. the convex hull $\reg^{\rm conv}$. After some lengthy calculations, which we do not present here for the sake of brevity, we derive for the convex hull of $r_i$ the form
\begin{align*}
    r_i^\text{conv}(t) = \begin{cases}
        t^2/2 & 
        \text{ for } t \leq (q-1/2) w_i, \\
        (t - w_i)^2/2 + (q-1/2) w_i^2 
        & \text{ for } t \geq (q + 1/2)w_i \\
        t(q - 1/2) - (q - 1/2) ^2 w_i^2/2 &
        \text{ otherwise }.
    \end{cases}
\end{align*}
A comparison of $r_i$ and $r_i^\text{conv}$ for two choices of the parameter $q$ is given in Figure~\ref{fig:doublewell} where the convex hull in both cases is visualized with a dashed line.

Solving the critical point equation~\eqref{eq:critpoint} where $\reg$ is replaced by its convex hull we find that the solutions differ quite a bit, at least for the case $k_i = 0$. 
Considering this case, we find that there are two different cases. First, whenever $q = 1/2$ the convex hull allows for arbitrary solutions $\signal_i \in [0, w_i]$. On the other hand if $q > 1/2$ the convex hull forces the choice $\signal_i = 0$. This means that even the slightest perturbation of the value of $r_i(w_i)$ will lead to a convex hull which loses this information. Comparing this to the solutions above we see that in the non-convex case we can always choose $\signal_i \in \set{0, w_i}$ independent of the value of $q$. This shows that there is a difference between the regularized solutions one obtains when considering the convex hull of the regularizer $\reg^{\rm conv}$ and the original regularizer $\reg$.

\subsection{Non-equivalence to Tikhonov regularization}

One might conjecture  that the proposed  regularization with $\al\phi$-critical points of $\tik_{\al, \data_\delta}$ is equivalent to Tikhonov regularization for some other modified choice  of the regularizer. While we cannot give a definite answer to this question at this point, we conjecture  that this is not the case. 

To support our hypothesis, let us analyze what would happen if the construction of $\al\phi$-critical points of $\tik_{\al, _\delta}$ were equivalent to the Tikhonov regularization with some regularizer $\reg_\phi$. In this case, the limiting problems would also coincide and thus for any $\data \in \ran(\Ko)$ and $\signal \in \X$ we have
\begin{equation*}
    \reg(\signal) \leq \min_{\Ko u = \data} \reg(u) + \phi(u) \Longleftrightarrow \reg_\phi(\signal) \leq \min_{\Ko u = \data} \reg_\phi(u) \,.
\end{equation*}
Denoting by $\signal_\phi$ a solution of $\min_{\Ko u = \data} \reg(u) + \phi(u)$ we have $\reg(\signal) \leq \reg(\signal_\phi) + \phi(\signal_\phi)$. Clearly, this is the case if and only if $\max \set{\reg , \reg(\signal_\phi) + \phi(\signal_\phi)}$ is minimal  among all possible solutions.
This essentially means that $\reg_\phi = \max \set{\reg , \reg(\signal_\phi) + \phi(\signal_\phi)}$.  

While such a choice can theoretically be used to characterize the limiting problem, we do not have access to $\signal_\phi$ and therefore cannot work with $\reg_\phi$ in practice. A slightly more subtle problem is that $\reg_\phi$ depends on the exact data and as such cannot be used for the case of noisy case where $\al > 0$. Thus, we conjecture that the proposed regularization is not equivalent to Tikhonov regularization independent of the choice $\reg_\phi$.

\subsection{ReLU-Networks as class of possible regularizers}
\label{sec:relu}

Next, we demonstrate that ReLU networks form a class of non-convex, relatively subdifferentiable regularizers that fit within the theory presented in this paper.   As discussed in \cite{li2020nett, lunz2018adversarial}, such regularizers are a powerful tool in the context of classical variational regularization.

Let now $\V$, $\U$ be further Hilbert spaces.  

\begin{definition}[Quasi-homogeneity] \label{def:quasi}
A function $f \colon \X \to \V$ is quasi-homogeneous, if there exists  $\Lc_f \colon \X \to L(\X, \V)$  such that  
$\sup_{\signal \in \X} \norm{\Lc_f(\signal)} < \infty$ and 
    $\sup_{\signal \in \X} \norm{f(\signal) - \Lc_f(\signal) \signal}  < \infty$. We call $\Lc_f$ the quasi-derivative of $f$.
\end{definition}

In Definition~\ref{def:quasi} and below $L(\X, \V)$ denotes the space of all bounded linear mappings from $\X$ to $\V$.   Quasi-homogeneity satisfies the following elementary rules.

\begin{lemma}[Quasi-homogeneity and relatively sub-differentiability]  \label{lem:quasi}
Let $f,h \colon \X \to \V$ and $g \colon \V \to \U$ be quasi-homogeneous, $\Wo  \in L(\X, \V)$ and $c \in \R$. Moreover, let  $v \in \V$ and  let $\psi \colon \V \to [0, \infty)$ be convex and sub-differentiable with $\psi(v) \leq C \norm{v}^p$ for some $C > 0$,  $p \geq 1$ and subgradient selection  $\psi'(v) \in (\partial_0 \psi)(v)$.  Then for some $\phi^1, \phi^2$ the following hold:
\begin{enumerate}[topsep=0em,label=(\arabic*)]
    \item $g \circ f$ is quasi-homogeneous with $\Lc_{g \circ f}(\signal) = \Lc_g(f(\signal)) \circ \Lc_f(\signal)$.
    \item $f + c h$ is  quasi-homogeneous with $\Lc_{f + c h} = \Lc_f + c \Lc_h$.
    \item $\signal \mapsto \Wo \signal + b$ is quasi-homogeneous.
\item   $\innerprod{v, f}$ is $\phi^1$-relative sub-differentiable  with  $\innerprod{v, \Lc_f(\signal) (\cdot)} \in \partial_{\phi^1} (\innerprod{v, f})(\signal)$. 
\item $\psi \circ f$ is $\phi^2$-relatively sub-differentiable with and  $\psi'(f(\signal))  \Lc_f(\signal) \in \partial_{\phi^2}(\psi \circ f)(\signal)$. 
\end{enumerate}
\end{lemma}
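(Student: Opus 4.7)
I would prove the five items in order, handling the algebraic claims (1)--(3) by direct verification from the definition and then using the affine structure of quasi-homogeneity to establish the relative sub-differentiability statements (4) and (5). For (1), I would write the telescoping identity
\begin{equation*}
g(f(\signal)) - \Lc_g(f(\signal)) \Lc_f(\signal) \signal = \bigl[g(f(\signal)) - \Lc_g(f(\signal)) f(\signal)\bigr] + \Lc_g(f(\signal))\bigl[f(\signal) - \Lc_f(\signal) \signal\bigr],
\end{equation*}
and bound each summand by the two constants from the quasi-homogeneity of $g$ and of $f$, respectively, while controlling the composite operator norm by $(\sup_v \norm{\Lc_g(v)})(\sup_\signal \norm{\Lc_f(\signal)})$. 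Item (2) is immediate from linearity of the defining suprema, and for (3) the choice $\Lc(\signal) \equiv \Wo$ makes the residual the constant $b$, so both defining bounds trivially hold.

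The core technical observation for (4) and (5) is the affine identity
\begin{equation*}
f(\signal) + \Lc_f(\signal)(u - \signal) = \bigl(f(\signal) - \Lc_f(\signal) \signal\bigr) + \Lc_f(\signal) u,
\end{equation*}
whose norm is bounded uniformly in $\signal$ by $M_1 + K\norm{u}$, where $M_1 := \sup_\signal \norm{f(\signal) - \Lc_f(\signal)\signal}$ and $K := \sup_\signal \norm{\Lc_f(\signal)}$ are finite by the quasi-homogeneity of $f$. For (4), this immediately yields
\begin{equation*}
\innerprod{v, f(\signal)} + \innerprod{v, \Lc_f(\signal)(u-\signal)} \leq \norm{v}\bigl(M_1 + K\norm{u}\bigr),
\end{equation*}
so I would set $\phi^1(u) := \norm{v}(M_1 + K\norm{u} + \norm{f(u)})$, which is nonnegative, finite for every $u$, and absorbs the subtraction of $\innerprod{v, f(u)}$ to give the required relative subgradient inequality.

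For (5) I would apply the convex subgradient inequality $\psi(w) \geq \psi(f(\signal)) + \innerprod{\psi'(f(\signal)), w - f(\signal)}$ at the point $w := f(\signal) + \Lc_f(\signal)(u - \signal)$, giving
\begin{equation*}
\psi(f(\signal)) + \innerprod{\psi'(f(\signal)), \Lc_f(\signal)(u - \signal)} \leq \psi\bigl(f(\signal) + \Lc_f(\signal)(u - \signal)\bigr) \leq C\bigl(M_1 + K\norm{u}\bigr)^p,
\end{equation*}
where the last step combines the affine identity with the growth hypothesis $\psi(v) \leq C\norm{v}^p$. Choosing $\phi^2(u) := C(M_1 + K\norm{u})^p$, the nonnegativity of $\psi \circ f$ then delivers the desired $\phi^2$-relative subgradient inequality. \textbf{The main obstacle} is item (5): a naive Cauchy--Schwarz estimate that splits the inner product and bounds $\norm{\psi'(f(\signal))}$ separately fails, since subgradients of a convex function of polynomial growth $p$ typically grow like $\norm{f(\signal)}^{p-1}$ and hence like $\norm{\signal}^{p-1}$, so the resulting bound would not be a function of $u$ alone. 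The affine trick rescues the argument by keeping the point at which $\psi$ is evaluated bounded in terms of $u$, independent of $\signal$.
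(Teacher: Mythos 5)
Your proof is correct and is essentially the argument the paper has in mind: the paper's own proof is the single sentence that the claims ``follow immediately from the triangle inequality and the defining properties,'' and your telescoping decomposition for (1), the affine identity $f(\signal)+\Lc_f(\signal)(u-\signal)=(f(\signal)-\Lc_f(\signal)\signal)+\Lc_f(\signal)u$ for (4)--(5), and the convex subgradient inequality evaluated at that affinely shifted point are exactly the details being elided. Your closing observation about why a naive Cauchy--Schwarz split fails in (5) is a genuinely useful addition that the paper does not make explicit.
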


\begin{proof}
These properties follow immediately from the triangle inequality and the defining properties of quasi-homogeneity and relatively sub-differentiability. 
\end{proof}

\begin{theorem}[Learned regularizers] 
\label{thm:network}
Let $\psi \colon \V \to [0, \infty)$ be convex  and sub-differentiable with  $\psi(v) \leq C \norm{v}^p$ for some $C > 0$ and $p \geq 1$. Let $\act_\ell$  be quasi-homogeneous and $\Ao_\ell$ be affine and  continuous for $\ell \in \set{1, \dots, L}$. Then  
\begin{equation} \label{eq:network1}
     \net  = \act_L \circ \Ao_L \circ \dots \circ \act_1 \circ \Ao_1 
\end{equation}
is  quasi-homogeneous. Additionally,  $\reg  = \psi \circ \net $ is  relatively sub-differentiable.
\end{theorem}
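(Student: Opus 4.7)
The plan is to establish the two assertions sequentially, with the first (quasi-homogeneity of $\net$) handled by induction on the depth $L$ and the second (relative sub-differentiability of $\reg$) as a one-line consequence of the first together with Lemma~\ref{lem:quasi}(5). Since Lemma~\ref{lem:quasi} already packages the chain rule, linearity, and the affine case, the argument essentially reduces to bookkeeping.

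For the quasi-homogeneity of $\net$, I would induct on $L$. The base case $L=1$ is $\sigma_1 \circ \Ao_1$: the affine map $\Ao_1$ is quasi-homogeneous by Lemma~\ref{lem:quasi}(3), $\sigma_1$ is quasi-homogeneous by hypothesis, and Lemma~\ref{lem:quasi}(1) closes compositions under quasi-homogeneity, yielding the quasi-derivative $\Lc_{\sigma_1}(\Ao_1(\signal)) \circ \Lc_{\Ao_1}(\signal)$. For the inductive step, suppose $\net_{L-1} \coloneqq \sigma_{L-1} \circ \Ao_{L-1} \circ \dots \circ \sigma_1 \circ \Ao_1$ is quasi-homogeneous. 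Then $\Ao_L \circ \net_{L-1}$ is quasi-homogeneous by two more applications of (3) and (1), and finally $\sigma_L \circ (\Ao_L \circ \net_{L-1}) = \net$ is quasi-homogeneous by another application of (1). This gives the first claim and, as a by-product, an explicit formula for $\Lc_\net$ as the ordered composition of the quasi-derivatives $\Lc_{\sigma_\ell}$ and the linear parts of $\Ao_\ell$.

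For the second claim, we apply Lemma~\ref{lem:quasi}(5) directly: $\psi$ is convex, sub-differentiable, and satisfies the growth bound $\psi(v) \leq C\norm{v}^p$ by hypothesis, while $\net$ has just been shown to be quasi-homogeneous. Lemma~\ref{lem:quasi}(5) therefore produces a bound $\phi$ such that $\psi'(\net(\signal)) \, \Lc_\net(\signal) \in \partial_\phi(\psi \circ \net)(\signal)$ for every $\signal \in \X$, so $\reg = \psi \circ \net$ is $\phi$-relatively sub-differentiable.

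I do not expect a substantial obstacle: the theorem is essentially a structural consequence of Lemma~\ref{lem:quasi}, and the only mild care required is to track that at each step the quasi-derivative remains uniformly bounded and the residual $f(\signal) - \Lc_f(\signal)\signal$ stays uniformly bounded along the composition — both of which are precisely the content of parts (1)--(3) of the lemma. The only place where a growth assumption is actually used is in the final invocation of part (5), where the polynomial upper bound $\psi(v) \leq C\norm{v}^p$ combines with the uniform boundedness of $\net(\signal) - \Lc_\net(\signal)\signal$ to produce the required bound $\phi$.
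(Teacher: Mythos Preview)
Your proposal is correct and follows exactly the approach of the paper, which simply states that the result ``follows from repeated application of Lemma~\ref{lem:quasi}.'' You have merely made explicit the induction on $L$ using parts (1) and (3) for quasi-homogeneity and the final invocation of part (5) for relative sub-differentiability, which is precisely what the paper's one-line proof intends.
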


\begin{proof}
Follows from repeated application of Lemma~\ref{lem:quasi}. 
\end{proof}

The crucial assumption in Theorem~\ref{thm:network} is that the activation functions $\act_\ell$ are quasi-homogeneous. This property is, for example,  satisfied in the case of the ReLU as the choice for the activation function as we discuss in the following example.

\begin{example}[ReLU regularizer]
Consider the case $\act_\ell = \relu$ defined by $\relu(\signal) = \max \set{0, \signal}$ where $\max$ is to be understood pointwise. Then $\Lc_{\relu}(\signal) = M_{g(\signal)}$, where $M_{g(\signal)}$ denotes pointwise multiplication with $g(\signal) = 0$ if $\signal \leq 0$ and $g(\signal) = 1$ if $\signal > 0$ and $g$ is again understood pointwise. The ReLU function is then quasi-homogeneous whenever the space $\X$ has the following property: For any $\signal, v  \in \X$ we have $\relu(\signal) \in \X, g(\signal) v \in \X$ and $\norm{g(\signal) v} \leq \norm{g(\signal)}_\infty \norm{v}$. Examples of such spaces are  $\X = \ell^r(\Lambda, L^p(\Omega, \mu))$ for some at most countable set $\Lambda$ and parameters $p, r \geq 1$. In particular, this also holds in the finite dimensional case  $\X = \R^n$. Thus, Theorem~\ref{thm:network} shows that ReLU networks are an appropriate choice to construct regularizers. 
We note here, that the same also holds true when the ReLU activation functions are replace by the more general class of parametric ReLU activation functions. 
\end{example}

Theorem~\ref{thm:network}  and Lemma~\ref{lem:quasi} imply that a relative sub-gradient of any  ReLU regularizer can be evaluated with the chain rule. Since deep-learning frameworks such as PyTorch \cite{pytorch} and Tensorflow \cite{tensorflow2015} are built on formal application of the chain rule, calculating elements $G(\signal)$ with  $G(\signal) \in \partial_\phi \reg(\signal)$ can be done by using backpropagation. Thus, the backpropagation procedure is an appropriate choice for any form of gradient descent used to find critical points of the given functional satisfying~\eqref{eq:critpoint}. This is for example of interest for learned regularizers  \cite{obmann2021augmented, li2020nett, mukherjee2020learned, lunz2018adversarial}.

\begin{remark}[Learned regularizers as multi-well potentials] \label{re:multiwell}
    Using  (parametric) ReLU activation functions, the network~\eqref{eq:network1} is a composition of piecewise affine operators and as such itself a piecewise affine operator. This means that the regularizer  $\reg = \lVert \net(\cdot) \rVert^2 $ of  Theorem~\ref{thm:network}, as for example considered in \cite{li2020nett}, behaves like a ``multi-well potential'' similar to the one considered in Figure~\ref{fig:doublewell}. That is, it behaves like a function with multiple local minima where ideally each local minimum is located at a desirable solution.  
    
    A reasonable strategy to find such a regularizer is to train a network to have local minima which are located at the desired solutions. However, due to various difficulties during this process (e.g. the regularizer itself being only a local minimum of the loss function used for training, non-ideal network architectures) one would also expect the regularizer to have slightly different values at the desired solutions. This means, that even if the local minima are located at the desired solutions one cannot expect all of these local minima to have the same value much less expect each of these local minima to be global minima of the regularizer. To put this in different words, one should expect slight perturbations as in Figure~\ref{fig:doublewell}.
\end{remark}

\section{Numerical simulations} \label{sec:numerics}

The goal of this section is not to show that non-convex regularizers can improve the reconstructions, but rather to test the theory derived in the previous sections and to show what may happen when non-convex regularizers are chosen. 

To this end, we consider the discretzied version of two toy-problems in $1$D. We consider an inapinting problem where around $50 \%$ of the signal entries were randomly removed. In this case the kernel of the forward operator $\Ko$ is simple to compute and by using a separable prior we can easily study the properties of the solution we obtain in the limit. This makes the first toy problem ideal for testing whether the properties (as described in the theory section) of the limiting solution hold true or not.\\
Further, we consider recovering a signal from its cumulative sum. Since this matrix is invertible there is a unique solution and following Theorem~\ref{thm:convergence} we should observe convergence to this solution in the limit $\delta \to 0$. This toy problem is therefore well suited to study if the given $\phi$-critical  points actually converge to the unique solution.

For both problems we consider as the signal to recover the discretization of the function $f(t) = \exp(-t ^2) \cdot \cos(t) \cdot (t - 0.5) ^ 2 + \sin(t ^ 2)$ on $t \in [-1, 1]$ using $N = 512$ equidistant sample points. We denote this signal by $\signal_\mathrm{true}$ and the true underlying data by $\data_\mathrm{true} = \Ko \signal_\mathrm{true}$ where $\Ko$ is the forward operator of the corresponding problem.

For each problem we consider the similarity measure given by $\similarity(\signal, \data) = \frac{1}{2} \norm{\Ko \signal - \data}^2$ and we construct a regularizer by $\reg(\signal) = \sum_{i = 1}^N \psi_{\rho, \beta}(\signal_i)$. Here, we define $\psi_{\rho, \beta}(t) = (t - \rho)^2 \cdot (t + \frac{\rho}{2})^2 + \frac{\beta}{2} t^2$ for $\rho, \beta > 0$. The function $\psi_{\rho, \beta}$ is constructed in such a way that it is non-convex but relatively sub-differentiable, see Remark~\ref{re:examples}. Figure~\ref{fig:regularizer} shows the function $\psi_{\rho, \beta}(t)$ with parameters $\rho = 2$ and $\beta = 10^{-1}$ for $t \in [-3, 3]$ where the $y$-axis is plotted on a logarithmic scale in order to emphasize the non-convexity.
We can see that this function has a global minimum at around $t = -\frac{\rho}{2}$, a local minimum close to $t = \rho$ and another critical point in the interval $[0, 1]$. The parameters $\rho = 2$ and $\beta = 10^{-1}$ are used for all the following simulations.

As a separable sum of relatively sub-differentiable and non-convex terms the regularizer $\reg$ as defined above is relatively sub-differentiable and non-convex. By definition of $\reg$ it is further coercive and hence the functional $\tik_{\al, \data^\delta}$ is coercive. This shows that Condition~\ref{cond:a} is satisfied and we consider the stability and convergence of the $\phi$-critical  points according to Theorem~\ref{thm:stability} and \ref{thm:convergence}.

To simulate noisy data we consider the data $\data_k = \data_\mathrm{true} + \delta_k \cdot n$ where $n = \frac{\xi}{\norm{\xi}}$, $\xi$ is a normally distributed random variable and $\delta_k = 10^{-k}$ for $k \in \set{4, \dots, 14}$.

Since a bound $\phi$ for $\reg$ can be chosen such that $\reg'(\signal) \in \partial_\phi \reg(\signal)$ (see Remark~\ref{re:examples}), we can simply search for a classical critical point of $\tik_{\al, \data^\delta}$ in order to obatin $\phi$-critical points. To achieve this, we apply Newton's method, e.g. \cite{dennis1996numerical}, and we find an initial guess for Newton's method by applying Nesterov accelerated gradient descent \cite{nesterov1983method} to the starting point $\signal_0 = 0$.
In the following we denote by $\signal_\al^\delta$ a critical point of $\tik_{\al, \data^\delta}$ and by $\signal_\al$ a critical  point of $\tik_{\al, \data_\mathrm{true}}$. Here, $\signal_\al$ is considered as the limit point for the stability considerations for which we consider the choices $\al \in \set{10^{-2}, 10^{-3}, 10^{-4}}$. In order to test for convergence we chosen $\al = \al(\delta) = \delta^q$ for $q \in \set{1, \frac{3}{2}}$.
For the convergence simulations we consider as the limit point the signal $\signal_\mathrm{true}$ for the cumulative sum problem, as in this case the solution is unique, and we construct an approximate solution for the inpainting problem by finding a critical  point of the function $\tik_{\al(\delta), \delta}$ for $\delta = 10^{-16}$ and we denote this solution by $\signal_\plus$. Implementation details and code are publicly available\footnote{https://git.uibk.ac.at/c7021101/critical-point-regularization}.

\subsection{Results}

Figure~\ref{fig:cumsum} depicts the value $\norm{\signal_\al^\delta - \signal_\al}$ for different values of $\al > 0$ (left), $\norm{\Ko \signal_{\al(\delta), \delta} - \data_\mathrm{true}}$ (middle) and $\norm{\signal_{\al(\delta), \delta} - \signal_\plus}$ (right). Each of these values is plotted against $\delta$ on a log-log scale. The plot in the left shows that for any chosen $\al$ we can observe the convergence of the sequence $\signal_\al^\delta$ to the critical point $\signal_\al$ as the noise-level tends to $0$. The plots in the middle and right show the convergence behaviour for different choices $\al(\delta) = \delta^q$ as specified above. All of the sequences can be observed to converge, i.e. $\norm{\Ko \signal_{\al(\delta), \delta} - \data_\mathrm{true}} \to 0$ and $\norm{\signal_{\al(\delta), \delta} - \signal_\plus} \to 0$ as the noise $\delta$ tends to $0$.

\begin{figure}
    \centering
    \includegraphics[scale=0.3]{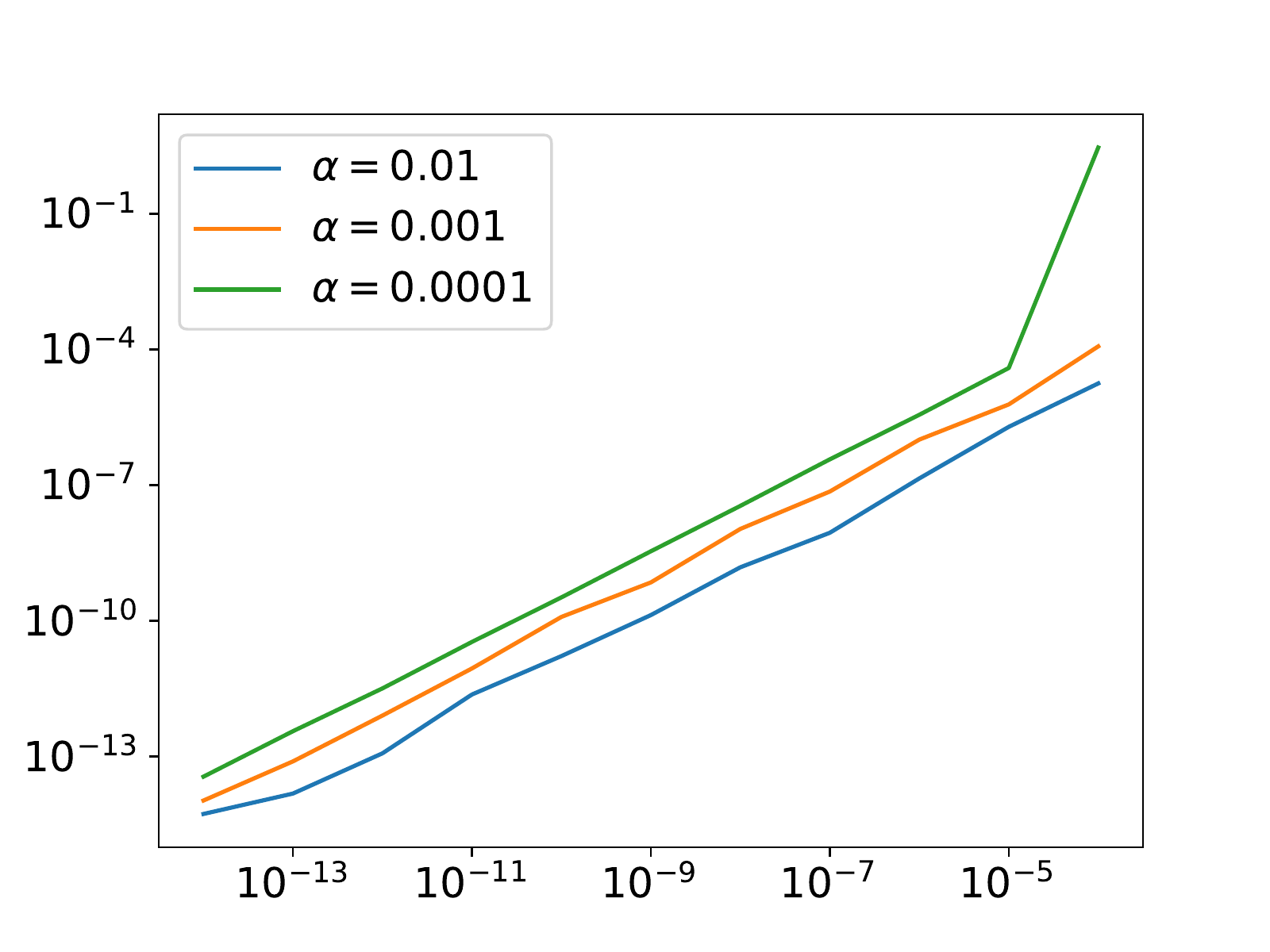}%
        \includegraphics[scale=0.3]{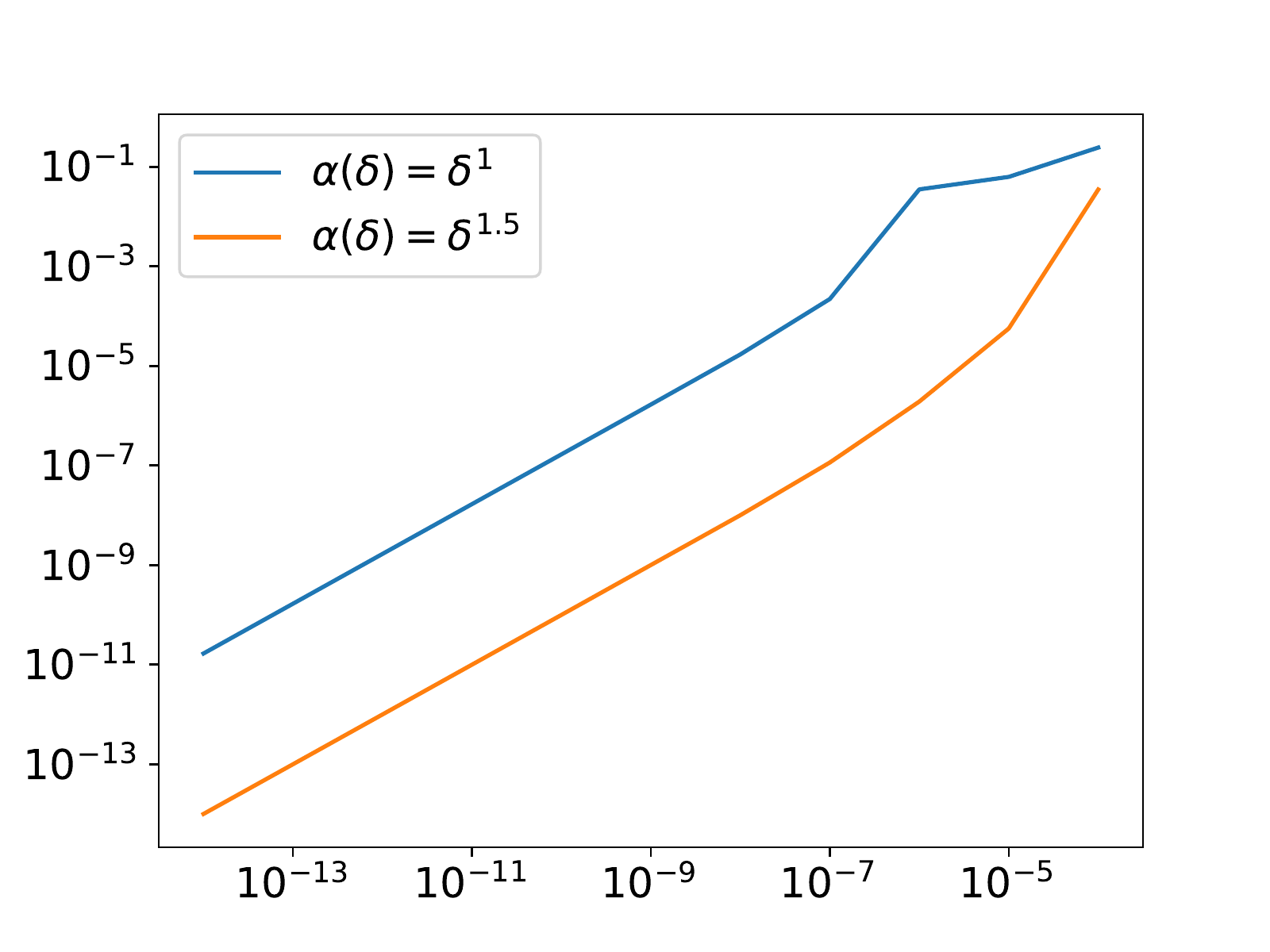}%
            \includegraphics[scale=0.3]{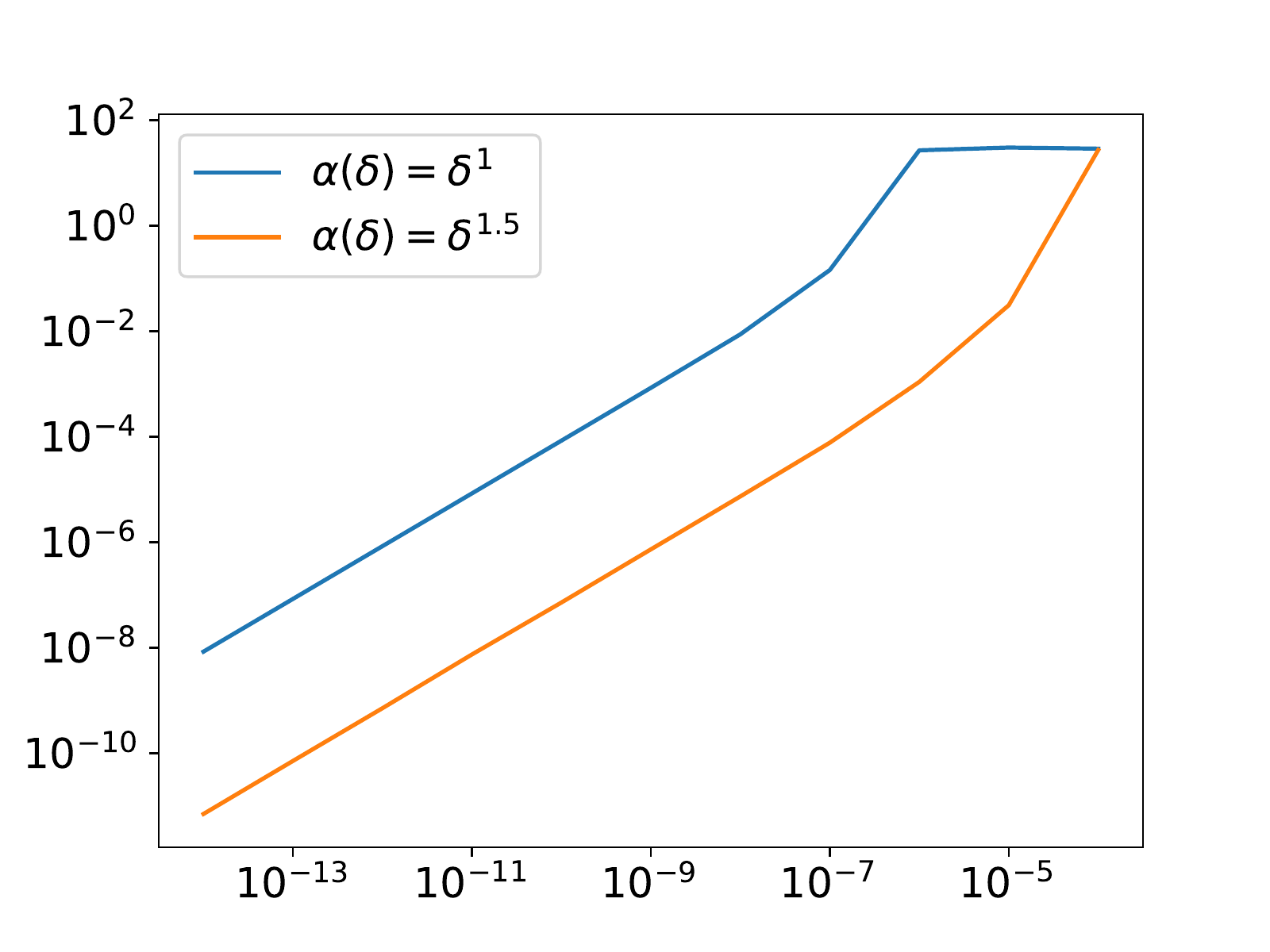}
    \caption{\textbf{Stability and convergence for the cumulative sum problem.} Each value is plotted in dependence on $\delta$. \textbf{Left:} $\norm{\signal_\al^\delta - \signal_\al}$ for different but fixed values of $\al$. \textbf{Middle:} $\norm{\Ko \signal_{\al(\delta), \delta} - \data}$ for different $\al(\delta)$. \textbf{Right:} $\norm{\signal_{\al(\delta), \delta} - \signal_\plus}$ for different $\al(\delta)$.}
    \label{fig:cumsum}
\end{figure}

Figure~\ref{fig:inpainting} shows the same behaviour for the stability and convergence plots for the inpainting problem as Figure~\ref{fig:cumsum} in the limit $\delta \to 0$. In particular convergence to a solution $\signal_\plus$ of the problem $\Ko \signal = \data_\mathrm{true}$ can be observed.

\begin{figure}
    \centering
    \includegraphics[scale=0.3]{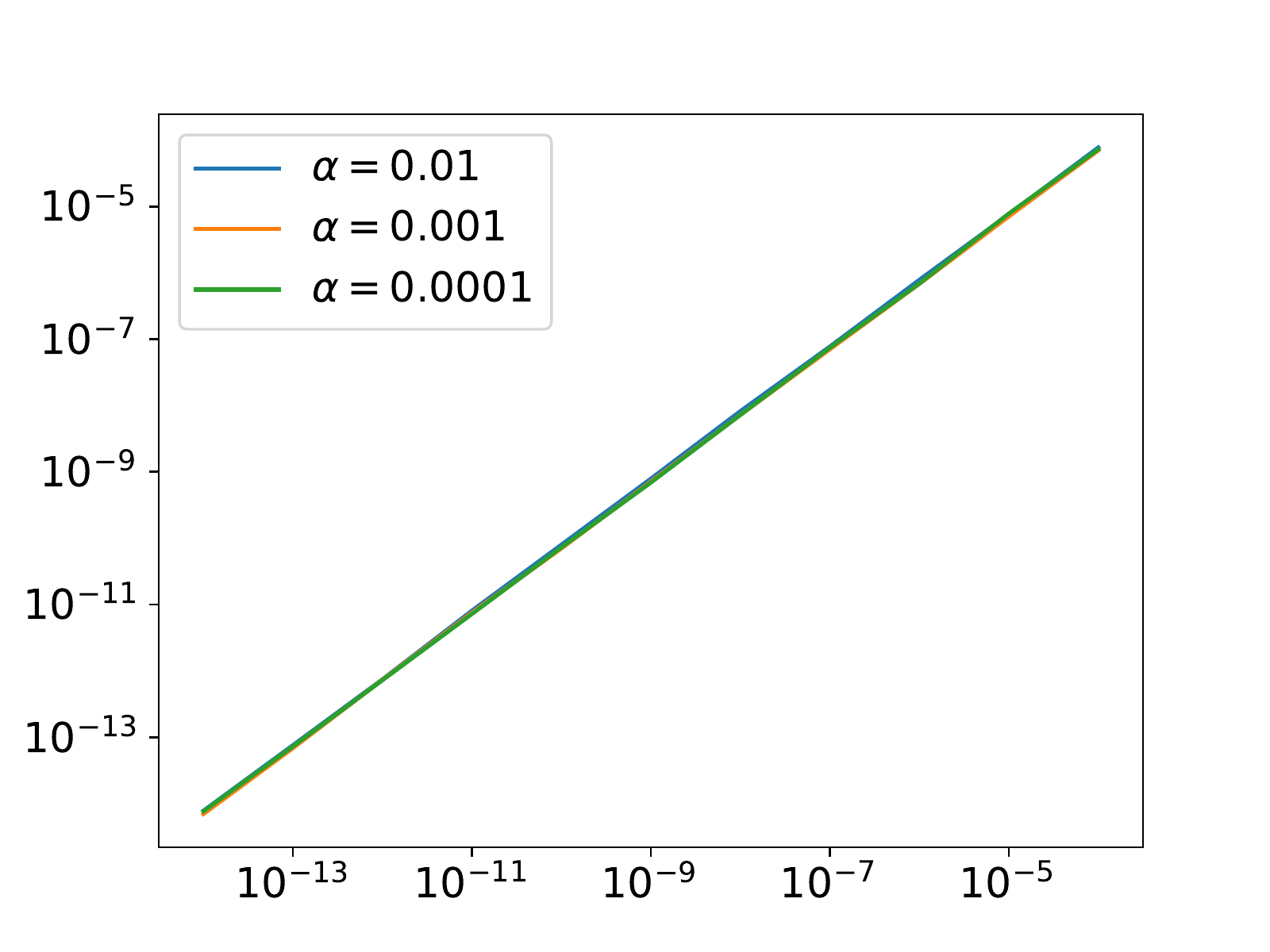}%
        \includegraphics[scale=0.3]{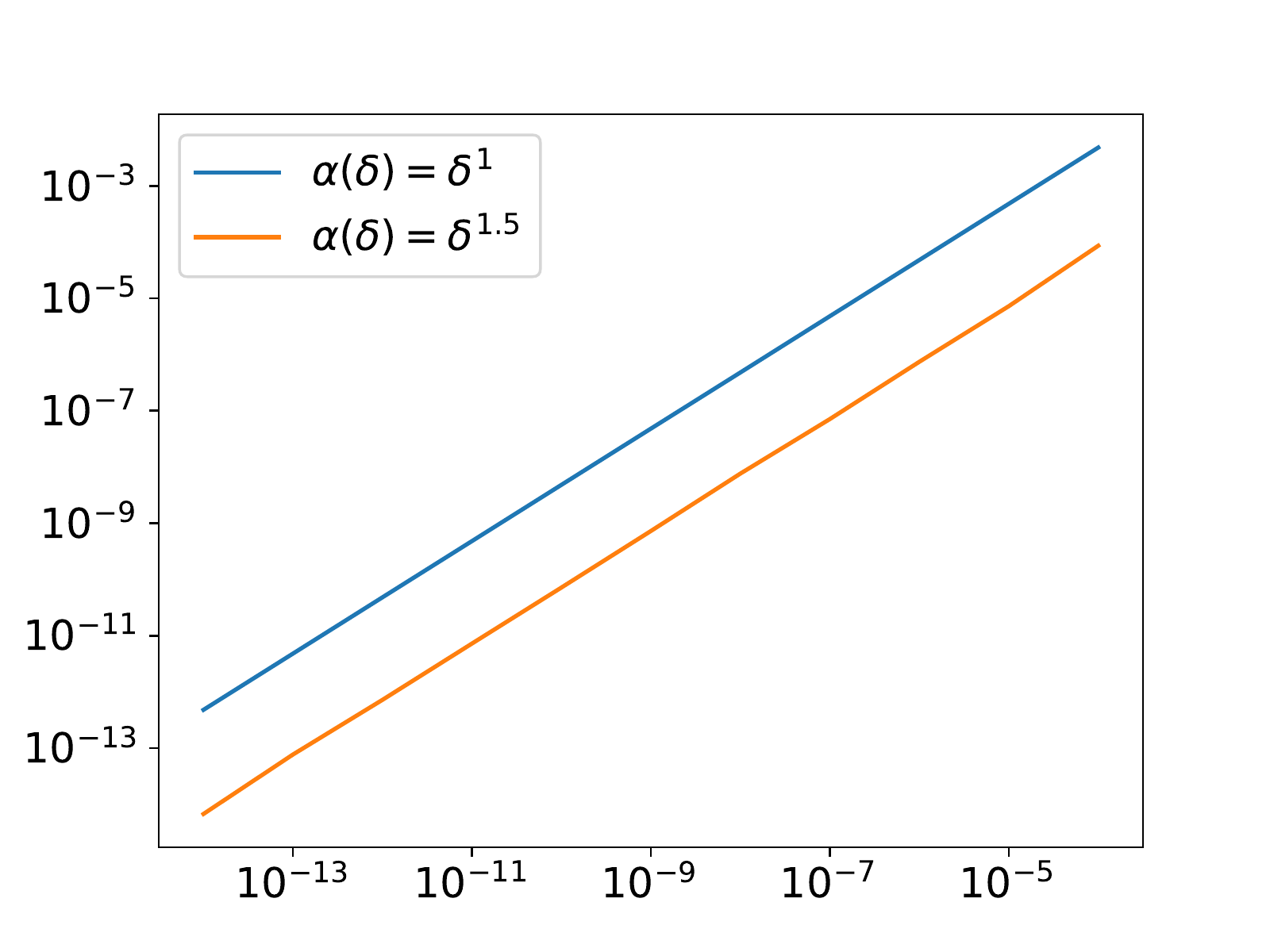}%
            \includegraphics[scale=0.3]{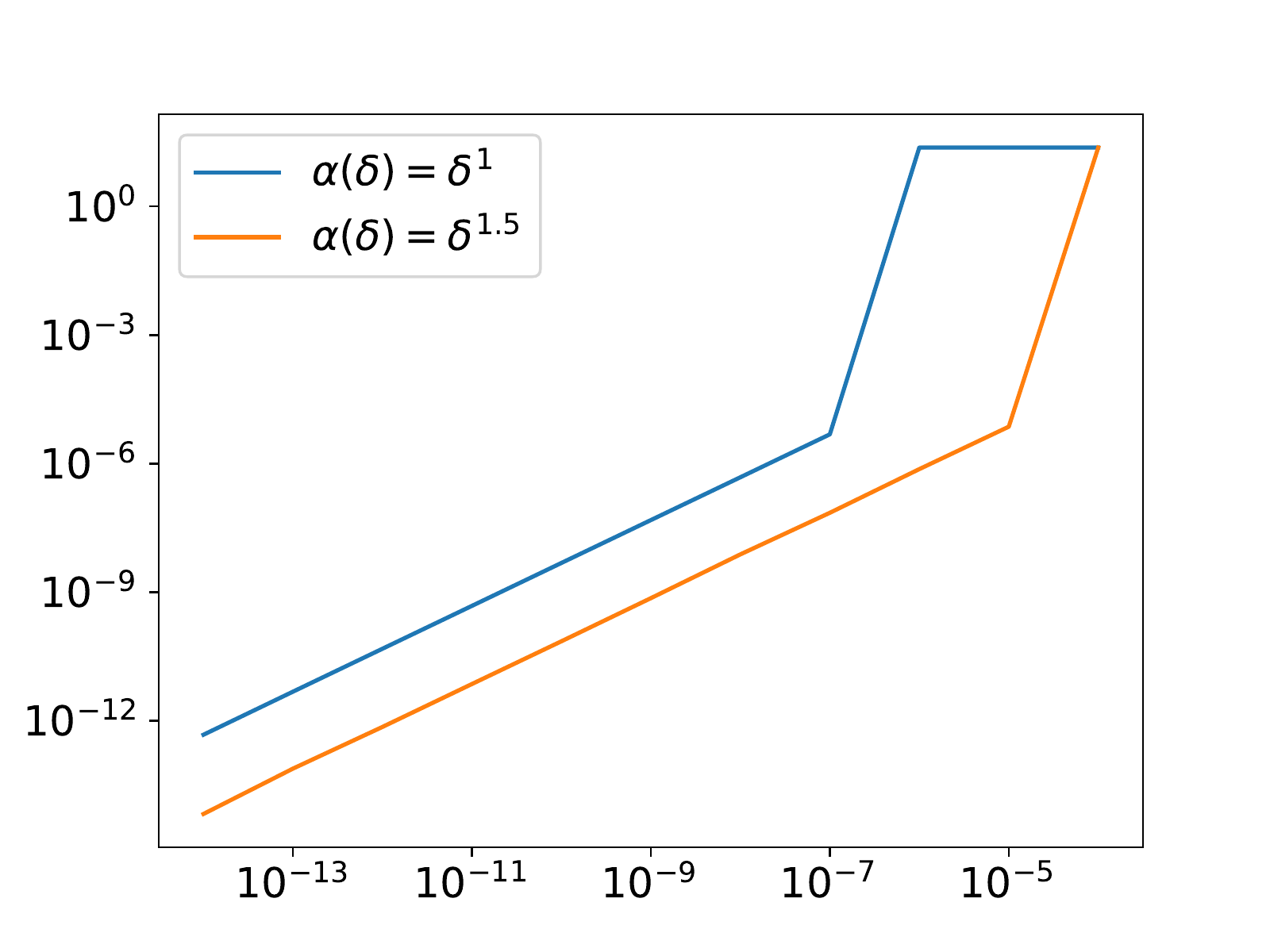}
    \caption{\textbf{Stability and convergence for the inpainting problem.} Each value is plotted in dependence on $\delta$. \textbf{Left:} $\norm{\signal_\al^\delta - \signal_\al}$ for different but fixed values of $\al$. \textbf{Middle:} $\norm{\Ko \signal_{\al(\delta), \delta} - \data}$ for different $\al(\delta)$. \textbf{Right:} $\norm{\signal_{\al(\delta), \delta} - \signal_\plus}$ for different $\al(\delta)$.}
    \label{fig:inpainting}
\end{figure}

A closer look at the inpainting problem reveals that the limit point $\signal_\plus$ is, however, not an $\reg$-minimizing solution. This can easily be checked due to the separability of the regularizer and the simple representation of the kernel of the inpainting problem.
The orange dot in Figure~\ref{fig:regularizer} (left) is the $\psi_{\rho, \beta}$-value of $(\signal_\plus)_i$ where $i$ is chosen as an index in the kernel of the inpainting matrix $\Ko$, i.e. such that $\Ko e_i = 0$ where $e_i$ is the $i$-th standard basis vector. Due to the separability of the regularizer we clearly have that $\signal_\plus$ is not an $\reg$-minimizing solution which arises due to the non-convexity of the regularizer. \\
Moreover, we have observed that if we initialize the values in the kernel close to $-1$ or $2$ then the limit $\signal_\plus$ will have entries at these $\phi$-critical  points of $\psi_{\rho, \beta}$. This shows that in such cases the solution we obtain in the limit heavily depends on the initialization we choose and that, depending on this initialization, the recovered solution may not be an $\reg$-minimizing solution and potentially even a local maximum or a saddle point.

Finally, Figure~\ref{fig:regularizer} (right) shows the values $\abs{\innerprod{\reg'(\signal_\plus), e_i}}$ where $(e_i)_i$ is a basis of the kernel of $\Ko$. Up to numerical accuracy we see that we have $\innerprod{\reg'(\signal_\plus), e_i} = 0$ for each such index $i$ which shows that $-\reg'(\signal_\plus) \in \ker(\Ko)^\perp$ as in Lemma~\ref{lemma:convergence}.

\begin{figure}
    \centering
    \includegraphics[scale=0.4]{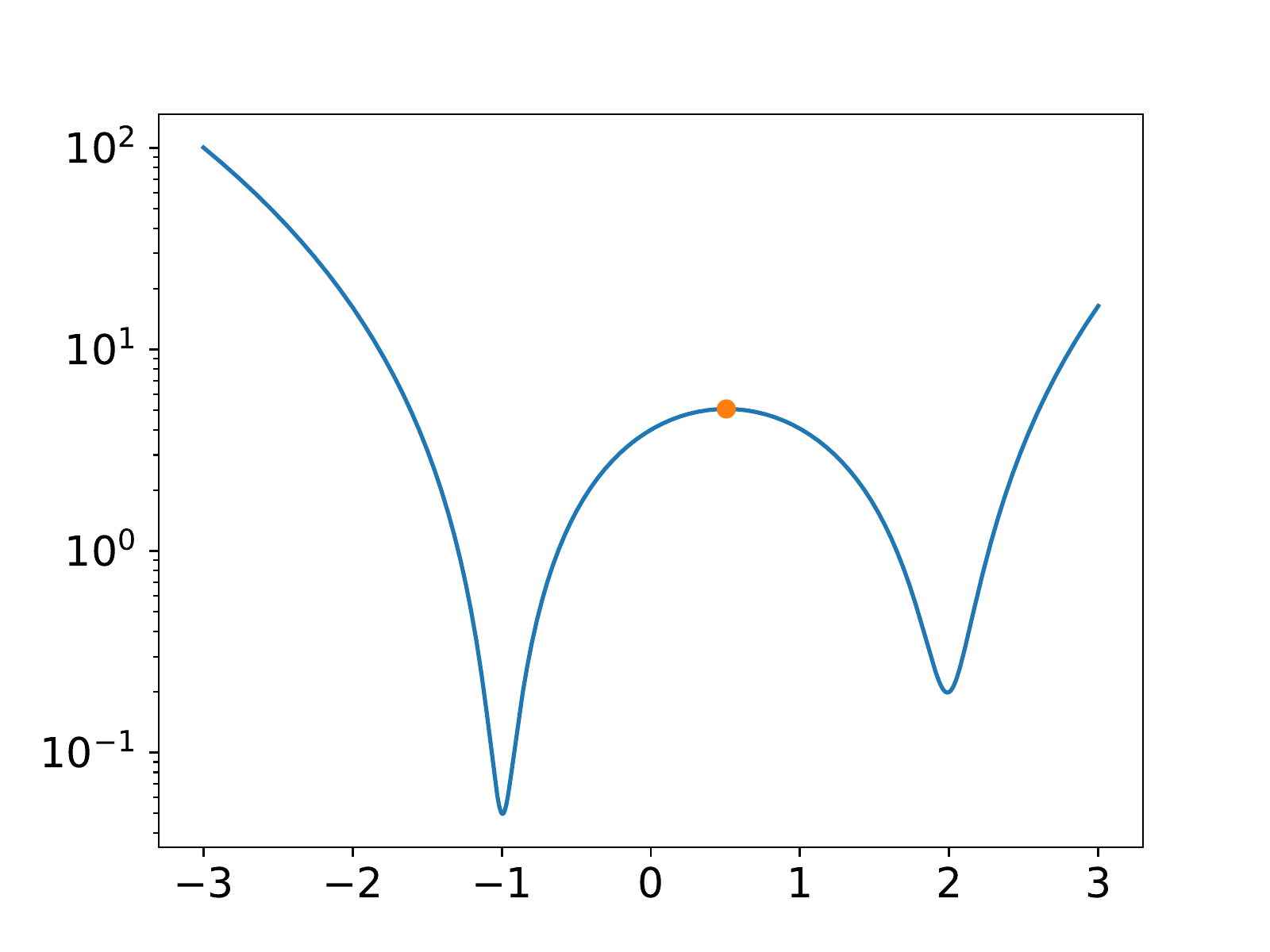}%
        \includegraphics[scale=0.4]{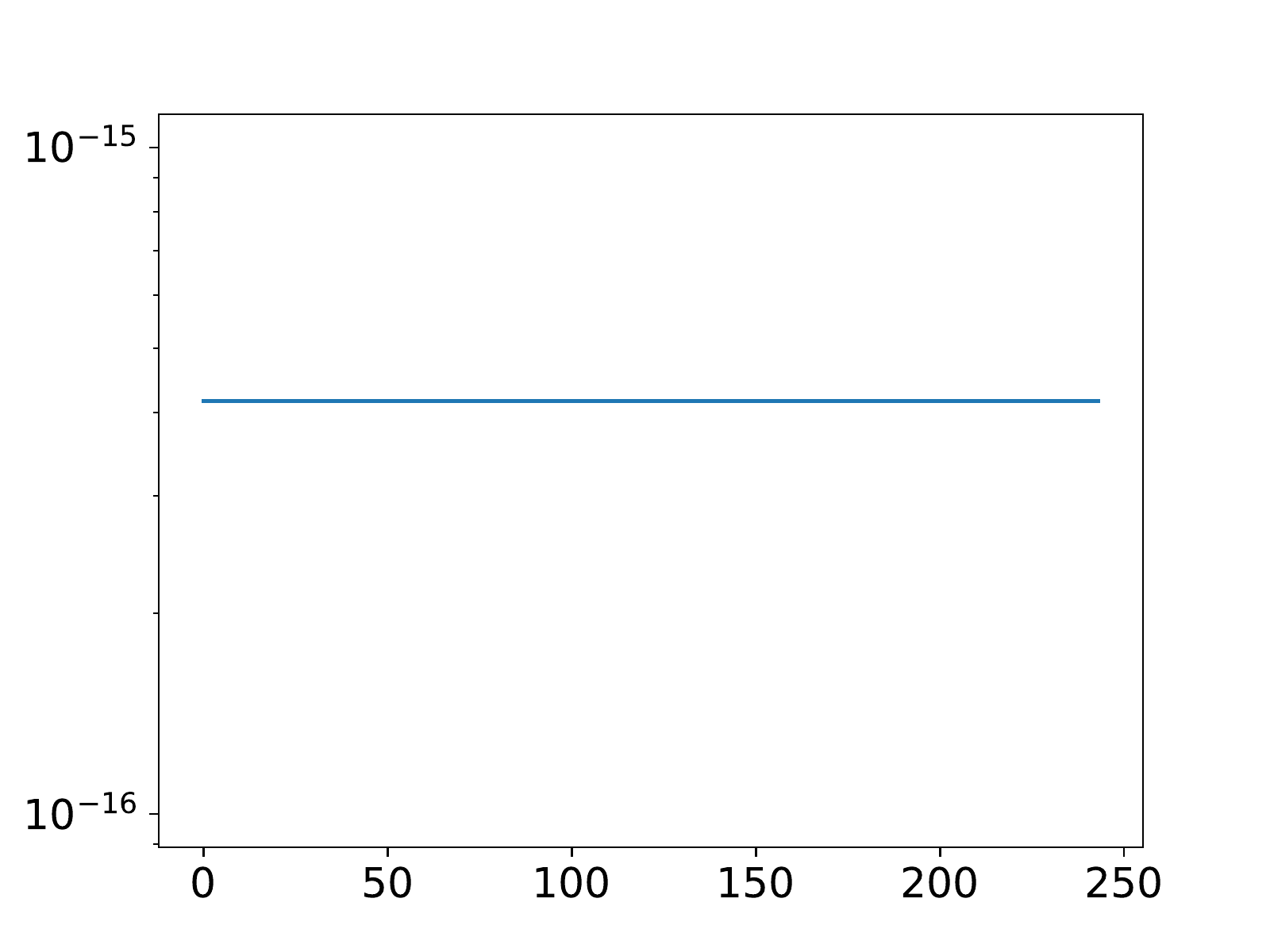}
    \caption{\textbf{Regularizer and properties of the inpainting solution.} \textbf{Left:} $\psi_{2, 10^{-1}}(t)$ for $t \in [-3, 3]$ on a logarithmic scale to emphasize the local minimum. The dot is the value of $(\signal_\plus)_i$ where $\signal_\plus$ is the solution of the inpainting problem and $i$ is an index chosen such that $e_i$ is in the kernel of the inpainiting problem. \textbf{Right:} the values $\abs{\innerprod{\reg'(\signal_\plus), e_i}}$ where $(e_i)_{i \in I}$ is a basis of the kernel of the inpainting problem.}
    \label{fig:regularizer}
\end{figure}

\section{Conclusion and outlook} \label{sec:conclusion}

We have introduced and studied the concept of critical point regularization, which, opposed to classical variational regularization, considers ($\phi$-)critical points of Tikhonov-functionals as regularized solutions. The advantage of this approach is that it completely discards the strong and typically unrealistic assumption of being able to achieve global minimizers of these functionals. Our theory shows that under reasonable assumptions on the involved functionals the resulting method will nevertheless be a stable and convergent regularization method. Further, we have shown that the solutions in the limit $\delta \to 0$ satisfy some form of first order optimality conditions of the constrained optimization problem $\inf_\signal \reg(\signal)$ subject to the constraint $\similarity(\signal, \data) = 0$. Besides this, the theory presented here extends the theory of convex functionals by showing that at no point does one require global minimizers, but only points which are close to a global minimum in some sense. For practical applications this means that minimization algorithms do not need to be run until convergence but may be stopped early, if easily verifiable conditions are met.
Additionally, under assumptions on the regularizer $\reg$ this theory is directly applicable to regularized solutions which are classical critical points of the involved functionals. As such our theory gives stability and convergence results for critical points of potentially non-convex functionals.

Finally, we have provided numerical simulations which support our theoretical findings, i.e. the stability and convergence of critical point regularization. Depending on the algorithm used for obtaining critical points, these numerical examples show that one cannot expect to find global or even local minima which further supports the arguments for the need of a theory based on ($\phi$-)critical points, which we have developed in this paper.

As the main concern of this paper was to introduce the concept of using ($\phi$-)critical points as regularized solutions, we have not derived any stability- or convergence-rates and studying such rates is subject to future work. Besides this, deriving conditions under which learned regularizers, e.g. \cite{antholzer2019nett, obmann2021augmented, mukherjee2020learned}, give rise to relatively sub-differentiable functions is also subject of future work.

\bibliographystyle{abbrv}
\bibliography{refs}

\end{document}